\title{$2$-blocks with an abelian defect group and a freely acting cyclic inertial quotient\thanks{The first author was supported by the London Mathematical Society (grant ECF-1920-03).}}
\author{
   Cesare Giulio Ardito\thanks{Department of Mathematics, City University of London, Northampton Square, London, EC1V 0HB, United Kingdom. Email: cesare.ardito@city.ac.uk}
\and 
Elliot McKernon \thanks{School of Mathematics, University of Manchester, Manchester, M13 9PL, United Kingdom. Email:
emckernon@gmail.com}}
\date{}
\newtheorem{theorem}{Theorem}[section]
\newtheorem*{theorem*}{Theorem}
\newtheorem{lemma}[theorem]{Lemma}
\newtheorem{corollary}[theorem]{Corollary}
\newtheorem{proposition}[theorem]{Proposition}
\newcommand{\Aut}{\operatorname{Aut}}
\newcommand{\Out}{\operatorname{Out}}
\newcommand{\aut}{\operatorname{Aut}}
\newcommand{\Pic}{\mathop{\rm Pic}\nolimits}
\newcommand{\SL}{\operatorname{SL}}
\newcommand{\GL}{\operatorname{GL}}
\newcommand{\cO} {\mathcal{O}}
\def\bigcp{\mathop{\mathchoice 
 {\hbox{\sf\Large\lower 0.1\baselineskip\hbox{Y}}}%
 {\hbox{\sf\large\lower 0.1\baselineskip\hbox{Y}}}%
 {\hbox{\sf\normalsize\lower 0.1\baselineskip\hbox{Y}}}%
 {\hbox{\sf\tiny\lower 0.1\baselineskip\hbox{Y}}}%
}}
\def\bigtimes{\mathop{\mathchoice 
 {\hbox{\sf\Large\lower 0.1\baselineskip\hbox{X}}}%
 {\hbox{\sf\large\lower 0.1\baselineskip\hbox{X}}}%
 {\hbox{\sf\normalsize\lower 0.1\baselineskip\hbox{X}}}%
 {\hbox{\sf\tiny\lower 0.1\baselineskip\hbox{X}}}%
}}
\def\Sym(#1){\mathop{\rm Sym}(#1)}
\def\Sym(#1){S_{#1}}
\def\diag(#1){\mathop{\rm diag}(#1)}
\begin{document}\setcounter{MaxMatrixCols}{50}
\maketitle 
\vspace{-1cm}
\begin{abstract}
We study blocks with an abelian defect group and a cyclic inertial quotient acting freely but not transitively. We prove that when $p=2$, such blocks are inertial, i.e. basic Morita equivalent to their Brauer correspondent. Together with a result of the second author on Singer cycle actions on homocyclic defect groups, this completes the classification of $2$-blocks with a cyclic inertial quotient acting freely on an abelian defect group.

Keywords: Donovan's conjecture; Finite groups; Morita equivalence; Block theory; Modular representation theory.

2010 Mathematics Subject Classification: Primary 20C20; Secondary 20C05.
\end{abstract}

\section{Introduction}
Throughout this paper, we work in a $p$-modular system $(K,\cO, k)$ in which $\cO$ is complete discrete valuation ring such that its residue field $k=\cO/J(\cO)$ is algebraically closed of characteristic $p$, and its field of fractions $K$ is considered to be big enough for all the groups considered in this paper. For a finite group $G$, we consider blocks of $\cO G$ (or $kG$), i.e. indecomposable summands of the group algebra. These correspond to decompositions of $1$ into a sum of primitive central idempotents. We study these up to Morita equivalence. Blocks of $\cO G$ and of $kG$ are in a one-to-one correspondence, though this correspondence is not generally known to preserve Morita equivalence class. 

For $B$ a block of $\cO G$, we can define its defect group $D$, the smallest $p$-subgroup of $G$ that contains a vertex of every indecomposable module in $B$. Given a block $e$ of $C_G(D)$ with Brauer correspondent $B$ (denoted $e^G=B$), we define the inertial quotient of $B$ as $E:= N_G(D,e)/DC_G(D)$. This is always a $p'$-group, and it acts faithfully on $D$. We denote the number of ordinary characters in $KG$ that lie in $B$ by $k(B)$, and the number of Brauer characters in $kG$ that lie in $B$ by $l(B)$.

We say that two blocks are Morita equivalent when their module categories are equivalent. In particular, any Morita equivalence between two blocks $A$ and $B$ can be characterised as a tensor product with an $(A,B)$-bimodule $M$ (and its inverse with a $(B,A)$-bimodule) \cite{thevenaz}. We say that $M$ \textit{realises} the Morita equivalence. If $M$ has endopermutation source, then we say that $A$ and $B$ are \textit{basic Morita equivalent}. If $M$ has trivial source, we say that $A$ and $B$ are \textit{source algebra equivalent}. Each equivalence is stronger than the ones before it, and is known to preserve more invariants (for a detailed list, see \cite{ea20}). At the moment, there is no known example of two blocks that are Morita equivalent but not basic Morita equivalent (note that the blocks defined in \cite{dade71} are not counterexamples, because they have the same fusion system), and it is conjectured that there is no such example. 

A block is said to be \textit{inertial} if it is basic Morita equivalent to its Brauer correspondent in $N_G(D)$.

Recall that an action of a group on a set is said to be \textit{free} (or \textit{semiregular}) if, for any two elements in the set, there is at most one group element mapping one to the other. This is equivalent to the statement that the only element of the group that fixes a point is the identity.

Let $D \cong (C_{2^m})^n$, and denote by $\Omega(D)$ its Omega subgroup, generated by the elements of order $2$. Here $\Omega(D) \cong (C_2)^n$ is the unique elementary abelian subgroup of $D$. We can then identify $\Omega(D)$ with the vector space $(\mathbb{F}_2)^n$. Elements of order $2^n-1$ in $\Out(\Omega(D))=\GL_n(2)$ are called Singer cycles, and each generates a \textit{Singer subgroup} which acts transitively on the non-trivial elements of $\Omega(D)$. Singer cycles are well-known in the literature: in \cite[2.1]{mck19} it is demonstrated that they always exist, and that the normaliser of a Singer subgroup in $\GL_n(2)$ is isomorphic to $C_{2^n-1} \rtimes C_n$, where $C_n$ acts on $(\mathbb{F}_2)^n$ as its field automorphisms, generated by the Frobenius automorphism. In \cite{mck19}, the second author showed that if a $2$-block of a finite group has an abelian homocyclic defect group $D$ and an inertial quotient that acts on $\Omega(D)$ as a Singer cycle, then it is either basic Morita equivalent to the principal block of $\SL_2(2^n)$, or it is inertial. 

Singer subgroups are characterised by their action on $\Omega(D)$ being both transitive and free. Here we look at free actions which are not transitive, thereby generalising the results in \cite{mck19}. This includes the so-called \textit{sub-Singer} case in \cite{mckthesis}, in which powers of Singer cycles were studied. We also consider an arbitrary abelian defect group, rather than just homocyclic groups. 


\begin{theorem} \label{maintheorem}
Let $G$ be a finite group, and let $B$ be a block of $\cO G$ (or $kG$) with an abelian defect $2$-group $D$ of rank $n$, and a cyclic inertial quotient $E$ such that $E$ acts freely but not transitively on $\Omega(D) \backslash \{1\}$. Then $B$ is inertial. In particular, $B$ is basic Morita equivalent to $\cO (D \rtimes E)$. Further, $l(B)=|E|$ and $k(B)= |E| + \frac{2^n-1}{|E|}$.
\end{theorem}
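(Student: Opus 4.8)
The plan is to reduce to a case that is already known, by exploiting the freeness of the action and the structure theory of fusion systems on abelian $2$-groups. First I would analyze the fusion system $\cF$ of $B$ on $D$. Since $D$ is abelian, $\cF$ is controlled by $N_G(D)$ (by Burnside), so $\cF$ is determined by the action of $E$ on $D$, and in particular $B$ and its Brauer correspondent $\cO(D \rtimes E)$ have the same fusion system. Because $E$ is cyclic and acts freely on $\Omega(D)\setminus\{1\}$, the order $|E|$ divides $2^n-1$; freeness means every non-identity element of $E$ acts without nontrivial fixed points on $\Omega(D)$, hence (lifting) acts fixed-point-freely on $D$ itself away from proper subgroups in a controlled way. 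The non-transitivity hypothesis is what distinguishes this from \cite{mck19}: it gives $|E| < 2^n-1$, so there is a proper intermediate subgroup, and I would use this to find a suitable self-centralizing subgroup or a product decomposition of $D$ compatible with the $E$-action.

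Next I would set up an inductive framework on $|D|$ (or on $n$). The key structural input is that a cyclic group acting freely on $(\mathbb{F}_2)^n$ decomposes that space into a direct sum of $E$-invariant subspaces on each of which $E$ acts with a single orbit-type; concretely, if $|E|=e$ and $e \mid 2^d - 1$ for the minimal such $d$, then $d \mid n$ and $\Omega(D)$ is a free $\mathbb{F}_{2^d}$-module, so $E$ embeds in a Singer subgroup of $\GL_d(2)$ acting diagonally. Lifting this to $D \cong (C_{2^m})^n$, I expect $D$ to decompose (as an $E$-group) as a direct sum of $n/d$ copies of a homocyclic group $D_0 \cong (C_{2^m})^d$ on which $E$ acts as a Singer cycle. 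Then I would invoke the second author's theorem from \cite{mck19}: the block $B_0$ with defect $D_0$ and inertial quotient $E$ acting as a Singer cycle is either basic Morita equivalent to the principal block of $\SL_2(2^d)$ or inertial — and the first possibility is excluded because $E$ is not transitive on $\Omega(D)\setminus\{1\}$ globally, or more precisely because the relevant local subgroup structure forces us into the inertial case.

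The main technical bridge is to get from "$B$ has the right fusion system and local structure" to "$B$ is basic Morita equivalent to $\cO(D\rtimes E)$". For this I would appeal to the theory of Clifford-theoretic reductions and the gluing results for abelian defect blocks with inertial quotient: using results on covering blocks and the structure of the normalizer $N_G(D,e)$, together with the fact that when the inertial quotient is cyclic and acts freely the associated "Külshammer–Puig" class obstruction vanishes (or is controlled), one can show the source algebra of $B$ is isomorphic to that of $\cO(D \rtimes E)$. The reduction to the Singer case on $D_0$ is performed block-theoretically: the decomposition $D = D_0^{\,n/d}$ with diagonal $E$-action means $D \rtimes E$ is a "central product"-type construction, and one checks that being inertial is detected factor-by-factor, so the \cite{mck19} conclusion on each $D_0$-factor assembles to the desired statement for $D$. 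Finally, the numerical claims $l(B) = |E|$ and $k(B) = |E| + \frac{2^n-1}{|E|}$ follow immediately once basic Morita equivalence with $\cO(D \rtimes E)$ is established, since these invariants are preserved and are easily computed for $D \rtimes E$: the $l$-value is the number of irreducible Brauer characters of $k(D\rtimes E)$, which equals $|E|$ (one for each linear character of $E$, all in the principal block since $D$ is a $2$-group), and $k(D \rtimes E)$ is counted by summing the number of $E$-orbits on $\Irr(D)$ weighted appropriately — the $|E|$ faithful-on-trivial-$D$-part characters plus $\frac{2^n-1}{|E|}$ orbits of nontrivial characters of $D$ restricted via $\Omega(D)$ under the free action.

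I expect the main obstacle to be verifying the vanishing of the second-cohomology/Külshammer–Puig obstruction and carrying out the Clifford-theoretic reduction cleanly in the non-transitive case — that is, showing that the potential "$\SL_2(2^d)$-type" exotic behaviour really is ruled out and that no twist survives, so that $B$ lands on the nose in the inertial case rather than merely having the correct fusion system and character counts. The freeness of the $E$-action is precisely what should force these obstructions to be trivial, but making that rigorous, likely via a careful study of the relevant local blocks and an appeal to existing classifications of inertial blocks with small inertial quotient, is where the real work lies.
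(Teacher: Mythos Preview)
Your proposal has a genuine gap at its core. The decomposition $D \cong D_0^{\,n/d}$ with diagonal $E$-action is a statement about the defect group and inertial quotient, and hence about the Brauer correspondent $\cO(D\rtimes E)$; it says nothing about the block $B$ of $\cO G$. There is no mechanism by which $B$ ``splits'' into blocks with defect $D_0$ to which \cite{mck19} could be applied, and the claim that ``being inertial is detected factor-by-factor'' is not a valid block-theoretic inference: inertiality asserts a basic Morita equivalence between $B$ and its Brauer correspondent, and you have produced no normal subgroup of $G$, nor any block covered by $B$, whose defect group is $D_0$. The vague appeal to ``Clifford-theoretic reductions and gluing results'' does not supply the missing idea---knowing the fusion system and the vanishing of the K\"ulshammer--Puig class tells you what the source algebra \emph{would} look like if $B$ were inertial, not that it \emph{is}. (A smaller point: $D$ is not assumed homocyclic, so even your decomposition of $D$ needs more care.)

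The paper's proof is entirely different and relies essentially on the classification of finite simple groups through \cite{ekks}. One takes a minimal counterexample (lexicographically in $(n,|E|,[G:O_{2'}(Z(G))],|G|)$), uses Fong reductions to make $B$ quasiprimitive with $O_2(G)=1$, and then analyses the generalised Fitting subgroup $F^*(G)$. One shows $D\leq F^*(G)$, that the components $L_i$ are all normal in $G$, and then invokes the classification of $2$-blocks of quasisimple groups with abelian defect to run through the finitely many possibilities for each covered block $b_i$. In every case one either derives a contradiction (the inertial quotient would fail to be cyclic, or would fix a nontrivial element of $D$, or $E$ would act transitively after all) or shows $b_i$ is inertial; since $[G:E(G)]$ is odd, \cite{zhou16} then forces $B$ to be inertial. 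The freeness and non-transitivity hypotheses are used to eliminate cases in this list, not to decompose $B$. Your observation that $H^2(E,k^\times)=1$ is used only at the very end to upgrade ``inertial'' to ``basic Morita equivalent to the untwisted $\cO(D\rtimes E)$'', and your computation of $l(B)$ and $k(B)$ is correct once inertiality is established---but establishing it is precisely the CFSG-dependent argument you have not supplied.
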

 
Note that this result relies on the well-studied properties of quasisimple groups with abelian defect groups for $p=2$, whose blocks were classified in \cite{ekks}. There is no reason to suspect this holds for odd $p$, and indeed in \cite{koshitani03}, there noted is already a counterexample when $p=3$ and $D=(C_3)^2$: the principal block of $\cO A_6$ is not Morita equivalent to its Brauer correspondent, but its inertial quotient $C_4$ acts as a power of a Singer cycle.

We can combine \Cref{maintheorem} and \cite{mck19} to obtain a complete classification of $2$-blocks with cyclic inertial quotients acting freely on abelian defect groups.

\begin{theorem} \label{classifications}
Let $G$ be a finite group, and let $B$ be a block of $\cO G$ (or $kG$) with an abelian defect $2$-group $D$ of rank $n$, and a cyclic inertial quotient $E$ that acts freely on $D$. Then either $B$ is  inertial, or $B$ is basic Morita equivalent to the principal block of $\cO (\SL_2(2^n))$, where the latter can only occur when $D$ is elementary abelian and $E$ acts transitively.
\end{theorem}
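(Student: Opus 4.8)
The plan is to prove \Cref{classifications} by splitting into two cases according to whether or not $E$ acts transitively on $\Omega(D)\setminus\{1\}$, and in each case reducing to an already-established result: \Cref{maintheorem} in the intransitive case and the main theorem of \cite{mck19} in the transitive case. First I would settle the intransitive case. If $E$ does not act transitively on $\Omega(D)\setminus\{1\}$, then since $\Omega(D)\setminus\{1\}$ is contained in the set of non-identity elements of $D$, on which $E$ acts freely by hypothesis, $E$ also acts freely on $\Omega(D)\setminus\{1\}$; it is therefore free but not transitive there, and \Cref{maintheorem} applies verbatim to give that $B$ is inertial. The trivial configurations $D=1$ and, more generally, $E=1$ can be dealt with separately, since then $B$ is nilpotent and hence source-algebra --- in particular basic Morita --- equivalent to $\cO(D\rtimes E)$.

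The substance lies in the transitive case, and the key point will be that transitivity forces $D$ to be homocyclic, so that \cite{mck19} becomes applicable. First I would observe that if $E$ acts transitively on the $2^n-1$ non-zero vectors of $\Omega(D)\cong(\mathbb{F}_2)^n$, then $\Omega(D)$ is an irreducible $\mathbb{F}_2E$-module: a proper non-zero $E$-invariant subspace would contain a non-zero vector that cannot be moved outside it. I would then deduce that $D$ is homocyclic. Writing $D\cong\bigoplus_{j=1}^{t}(C_{2^{a_j}})^{n_j}$ with $a_1<\dots<a_t$ and each $n_j\geq 1$, and supposing $t\geq 2$, the subgroup of $D$ generated by the elements of order $2$ in $2^{a_1}D$ is characteristic in $D$, hence $E$-invariant; it is contained in $\Omega(D)$ and has $\mathbb{F}_2$-dimension $n_2+\dots+n_t$, which is strictly between $0$ and $n$, contradicting irreducibility. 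Once $D\cong(C_{2^m})^n$ is homocyclic, freeness of the $E$-action on $D$ makes the $E$-action on $\Omega(D)$ faithful, and combining freeness with transitivity on the $2^n-1$ non-zero elements of $\Omega(D)$ gives $|E|=2^n-1$; thus $E\cong C_{2^n-1}$ and its image in $\GL_n(2)$ is a Singer cycle. At this point \cite{mck19} applies and yields that $B$ is either inertial or basic Morita equivalent to the principal block of $\cO(\SL_2(2^n))$.

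It remains to pin down when the second alternative can occur. If $B$ is basic Morita equivalent to the principal block of $\cO(\SL_2(2^n))$, then, since basic Morita equivalence preserves the isomorphism type of the defect group (indeed the fusion system; see \cite{ea20}) and the defect groups of that block are the Sylow $2$-subgroups of $\SL_2(2^n)$, which are elementary abelian of order $2^n$, we obtain $D\cong(C_2)^n$; and we are by construction in the transitive case. Hence the $\SL_2(2^n)$ outcome occurs only with $D$ elementary abelian and $E$ acting transitively, and assembling the two cases completes the proof. The one step that I expect to need genuine (if brief) work is the passage from transitivity on $\Omega(D)\setminus\{1\}$ to $D$ being homocyclic, since everything else is bookkeeping around \Cref{maintheorem} and \cite{mck19}; but this is a routine argument about coprime actions on finite abelian $p$-groups, so I do not anticipate a serious obstacle --- the real difficulties all reside in the two cited results.
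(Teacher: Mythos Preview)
Your proposal is correct and follows exactly the approach the paper has in mind: split on whether $E$ acts transitively on $\Omega(D)\setminus\{1\}$, invoke \Cref{maintheorem} in the intransitive case and \cite{mck19} in the transitive one, and read off the constraint on $D$ from the defect groups of $\SL_2(2^n)$. Your explicit verification that transitivity on $\Omega(D)\setminus\{1\}$ forces $D$ to be homocyclic (so that \cite{mck19} actually applies) is a detail the paper leaves entirely implicit, but it is needed and your characteristic-subgroup argument for it is sound.
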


\section{Reductions}

We record standard reductions in modular representation theory, which we will use extensively.
\begin{theorem}[Fong's First Reduction \cite{lin18}] \label{fong1}
Let $G$ be a finite group, and let $N \lhd G$. Let $b$ be a block of $\cO N$ and $B$ be a block of $\cO G$ that covers $b$. Let $H=\operatorname{Stab}_G(b)$, where the action of $G$ on $N$ is given by conjugation. Then there is a unique block $C$ of $\cO H$ that covers $b$ and such that, as idempotents, $BC=B$. Further, $C$ is source algebra equivalent to $B$.
\end{theorem}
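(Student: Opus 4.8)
The statement is the classical Fong--Reynolds reduction, so the plan is to reproduce its standard proof via a Morita equivalence realised by a summand of a permutation bimodule. Write $e$ for the block idempotent of $b$ in $\cO N$. Since $H=\operatorname{Stab}_G(b)$ and $e \in Z(\cO N)$, conjugation by $N$ fixes $e$, so $N \le H$ and $e$ is central in $\cO H$; thus $\cO H e$ is the sum of those blocks of $\cO H$ covering $b$. The $G$-conjugates of $e$ are pairwise orthogonal block idempotents of $\cO N$ forming a single orbit of size $[G:H]$, and their sum $f=\Tr_H^G(e)$ is a $G$-stable, hence central, idempotent of $\cO G$. Using the standard fact that the blocks of $\cO N$ covered by $B$ form exactly one $G$-orbit, namely that of $e$, one gets $e_B e' = 0$ for every block idempotent $e'$ of $\cO N$ outside this orbit, whence $e_B f = e_B$; so $B$ is a block of the algebra $\cO G f$.

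The key computation --- and the real heart of the argument --- is the identity $e\,\cO G\, e = \cO H e$. To see it, I would decompose $e\,\cO G\, e$ along a transversal of $G$ and use $e\,g\,e = g\,({}^{g^{-1}}e)\,e$: if $g \notin H$ then ${}^{g^{-1}}e$ is a block idempotent of $\cO N$ orthogonal to $e$, so the term vanishes, while if $g \in H$ the term lies in $\cO H e$ (as $e$ is central in $\cO H$). Hence every ``off-diagonal'' contribution dies and $e\,\cO G\, e = \cO H e$ exactly.

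With this in hand I would set up the bimodules $M=\cO G e$, viewed as an $(\cO G f,\,\cO H e)$-bimodule, and $M'=e\,\cO G$, viewed as an $(\cO H e,\,\cO G f)$-bimodule. The previous identity gives $M' \otimes_{\cO G f} M = e\,\cO G\, e = \cO H e$, while multiplication yields $M \otimes_{\cO H e} M' \cong \cO G e\,\cO G = \cO G f$, the last equality because $f \in \cO G e\, \cO G \subseteq \cO G f$ (all conjugates ${}^g e$ lie in the two-sided ideal generated by $e$). Thus $M$ and $M'$ realise a Morita equivalence between $\cO G f$ and $\cO H e$, which matches up primitive central idempotents and so induces a bijection between the blocks of $\cO G$ covering $b$ and the blocks of $\cO H$ covering $b$. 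Defining $C$ to be the block of $\cO H$ corresponding to $B$ under this bijection yields the relation $e_B e_C = e_B$ --- i.e. $BC=B$ as idempotents --- and the uniqueness of $C$ follows from the bijectivity.

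Finally I would upgrade this to a source algebra (indeed trivial source) equivalence. Since $e$ is central in $\cO H$, right multiplication by $e$ is an idempotent endomorphism of $\cO G$ as an $(\cO G,\cO H)$-bimodule, so $\cO G = \cO G e \oplus \cO G(1-e)$ and $M=\cO G e$ is a direct summand of the permutation $\cO[G\times H]$-module $\cO G$ (with $G$-basis permuted by $(g,h)\colon x \mapsto gxh^{-1}$). Refining by the central idempotents $e_B$ and $e_C$ keeps the block-level bimodule $e_B\,\cO G\, e_C$ a summand of a permutation bimodule, hence of trivial source, which is exactly the definition of a source algebra equivalence. The main obstacle in a fully rigorous write-up is this last step: one must check that passing to the block summand preserves the permutation-summand property and that the vertices, and hence the defect groups of $B$ and $C$, correspond, so that the trivial-source bimodule genuinely realises the equivalence between the single blocks $B$ and $C$ rather than merely between the sums $\cO G f$ and $\cO H e$.
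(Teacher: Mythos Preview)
The paper does not actually prove this statement: Theorem~\ref{fong1} is recorded as a known reduction and attributed to \cite{lin18}, with no argument given in the paper itself. Your proposal is the standard Fong--Reynolds proof (idempotent truncation $e\,\cO G\,e=\cO H e$, Morita bimodule $\cO G e$, and the observation that this is a summand of the permutation bimodule $\cO G$), which is essentially the argument one finds in the cited reference; so there is nothing to compare beyond noting that you have supplied the textbook proof that the paper merely quotes.

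One small point worth tightening if you write this up: the isomorphisms $M'\otimes_{\cO G f}M\cong \cO H e$ and $M\otimes_{\cO H e}M'\cong \cO G f$ are stated a bit briskly. For the first you are implicitly using that $e\,\cO G\otimes_{\cO G f}\cO G e \to e\,\cO G e$ given by multiplication is an isomorphism, which follows because $e\,\cO G$ is a direct summand of $\cO G f$ as a right $\cO G f$-module (via the idempotent $e$), not merely a quotient; for the second one should exhibit an explicit inverse or argue via the decomposition $\cO G f=\bigoplus_{g\in G/H}{}^g e\,\cO G$ to see that the multiplication map $\cO G e\otimes_{\cO H e} e\,\cO G\to \cO G f$ is bijective rather than just surjective. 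These are routine but should be made explicit.
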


We also use the following version of Fong's Second Reduction.
\begin{theorem}[\cite{kupu90}] \label{fong2}
Let $G$ be a finite group and $N \lhd G$. Let $B$ be a block of $\cO G$ with defect group $D$ that covers a $G$-stable nilpotent block $b$ of $\cO N$ with defect group $D \cap N$. Then there are finite groups $M \lhd L$ such that $M \simeq D \cap N$, $L/M \simeq G/N$, there is a subgroup $D_L \leq L$ with $D_L \simeq D$ and $M \leq D_L$, and there is a central extension $\tilde{L}$ of $L$ by a $p'$-group, and a block $\tilde{B}$ of $\cO \tilde{L}$ which is basic Morita equivalent to $B$ and has defect group $\tilde{D} \simeq D_L \simeq D$ and the same inertial quotient as $B$.
\end{theorem}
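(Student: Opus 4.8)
The plan is to prove this along the lines of \cite{kupu90}, via Puig's theory of source algebras together with the Broué--Puig structure theorem for nilpotent blocks. Write $Q := D \cap N$; since $N \lhd G$ we have $Q \lhd D$, and $Q$ is a defect group of the nilpotent block $b$. The first step is to apply the Broué--Puig theorem: the source algebra of $b$ is isomorphic, as an interior $Q$-algebra, to $\End_\cO(V) \otimes_\cO \cO Q$ for some indecomposable capped endopermutation $\cO Q$-module $V$ with vertex $Q$; in particular $\cO N b$ is basic Morita equivalent to $\cO Q$. Set $S := \End_\cO(V)$, a Dade $Q$-algebra, which encodes all the local data of $b$. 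A feature I will use later is that the cap $V$ has $\cO$-rank $d$ prime to $p$, so $S \cong \operatorname{Mat}_d(\cO)$ with $d$ coprime to $p$.

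The second step is to decompose the source algebra $A$ of the covering block $B$. Since $b$ is $G$-stable, $G$ permutes the local structure of $b$ coherently, and Puig's analysis of pointed groups lets me extend the interior $Q$-structure of $S$ to an interior $D$-structure compatible with the $G$-action. This should yield an isomorphism of interior $D$-algebras $A \cong S \otimes_\cO T$, where $T$ is a crossed product graded over $G/N$ whose degree-one component is $\cO Q$ and whose higher components are twisted by the $G/N$-action on $S$. The obstruction to trivialising this twist is a cohomology class $\alpha \in H^2(G/N; \cO^\times)$.

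The third step is to realise this data as a genuine group algebra. Unwinding the grading of $T$ produces a finite group $L$ sitting in an extension $1 \to M \to L \to G/N \to 1$ with $M \cong Q$, where the $G/N$-action on $M$ lifts the outer action of $G$ on the defect group $Q$ determined by the $G$-stable nilpotent block $b$, and the extension class is inherited from $G$ over $N$; the degree structure also produces a subgroup $D_L \leq L$ with $M \leq D_L$ and $D_L \cong D$, sitting in $1 \to Q \to D_L \to D/Q \to 1$ matching $1 \to Q \to D \to D/Q \to 1$. To absorb the twist $\alpha$ I pass to a central extension $1 \to Z \to \tilde{L} \to L \to 1$ by a cyclic group $Z$ whose order equals that of $\alpha$: choosing the block $\tilde{B}$ of $\cO \tilde{L}$ lying over a faithful character of $Z$ turns the twisted crossed product into an ordinary block. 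The essential point is that $Z$ may be taken to be a $p'$-group: because $S \cong \operatorname{Mat}_d(\cO)$ arises from a degree-$d$ projective representation with $d$ prime to $p$, a determinant argument shows $\alpha$ has order dividing $d$, hence of $p'$-order. As $Z$ is a $p'$-group, the defect group of $\tilde{B}$ is $\tilde{D} \cong D_L \cong D$.

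The final step is to match invariants. By construction $\tilde{B}$ is basic Morita equivalent to $B$, the equivalence being realised by the tensor product of the Broué--Puig bimodule for $b$ with the identifications of the crossed-product pieces, whose source is built from the endopermutation module $V$. Since basic Morita equivalence preserves the defect group and the fusion system on it, and the inertial quotient is determined by that fusion data (being $\Out_{\cF}(D)$ for the fusion system $\cF$ of $B$), the block $\tilde{B}$ has defect group $\cong D$ and the same inertial quotient as $B$. The main obstacle is the second step: proving that $A$ genuinely splits as $S \otimes_\cO T$ and controlling the resulting cocycle. This is the technical core of \cite{kupu90}, resting on the rigidity of the Dade $Q$-algebra $S$ (Skolem--Noether for $\operatorname{Mat}_d(\cO)$) and on the stability of the cap $V$ under the $G/N$-action; it is precisely here that the $p'$-rank of $V$ enters to guarantee that the central extension can be chosen to be a $p'$-group.
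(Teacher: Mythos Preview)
The paper does not actually prove this statement: its entire ``proof'' consists of pointing to \cite[2.2]{ea14} for how to extract the formulation from \cite{kupu90}, and to \cite[8.2.18]{lin18} for the fact that the resulting Morita equivalence is basic. Your proposal, by contrast, sketches the substance of the K\"ulshammer--Puig argument itself: Brou\'e--Puig for the nilpotent block $b$ giving the Dade $Q$-algebra $S$, the tensor decomposition $A \cong S \otimes_\cO T$ of the source algebra of $B$ with $T$ a crossed product over $G/N$, the realisation of $T$ as a twisted group algebra of an extension $L$ of $G/N$ by $Q$, and the absorption of the $2$-cocycle by a $p'$-central extension. So at the level of mathematical content you and the paper agree---the paper simply outsources everything you write to the cited references.

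Your outline is faithful to \cite{kupu90}, including the determinant normalisation that forces the cocycle $\alpha$ to have order dividing the $p'$-rank $d$ of $V$, and the final appeal to the fact that basic Morita equivalence preserves the fusion system (hence $\Out_{\cF}(D)$, i.e.\ the inertial quotient). The only place where you are noticeably informal is the second step: the existence of the splitting $A \cong S \otimes_\cO T$ as interior $D$-algebras is genuinely delicate (it is the heart of \cite{kupu90}, requiring the stability of the endopermutation source under the $N_G(D,e)$-action and careful bookkeeping with pointed groups), and your paragraph acknowledges but does not resolve this. For the purposes of this paper that is entirely adequate, since the paper itself makes no attempt to reproduce that argument either.
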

\begin{proof}
Guidance on the extraction of this result from \cite{kupu90} can be found in \cite[2.2]{ea14}. The fact that the Morita equivalence is basic follows from \cite[8.2.18]{lin18}.
\end{proof}
In particular, we use the following immediate corollary, originally stated in \cite{ea17}.
\begin{corollary}[\cite{ea17}] \label{corollaryfong2}
Let $G$ be a finite group, let $N \lhd G$ with $N \not\leq Z(G)O_p(G)$. Let $B$ be a quasiprimitive $p$-block of $\cO G$ covering a nilpotent block $b$ of $\cO N$. Then there is a finite group $H$ with $[H:O_{p'}(Z(H))] < [G:O_{p'}(Z(G))]$ and a block $B_H$ with isomorphic defect group to the one of $B$, such that $B_H$ is basic Morita equivalent to $B$.
\end{corollary}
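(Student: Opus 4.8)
The plan is to derive this as an essentially immediate consequence of Fong's Second Reduction (\Cref{fong2}). Since $B$ is quasiprimitive, the block $b$ of $\cO N$ that it covers is the \emph{unique} block of $\cO N$ below $B$, and is in particular $G$-stable. Before invoking \Cref{fong2} I would first enlarge $N$, replacing it by $N^{*} := N\cdot O_{p'}(Z(G))$. This is again normal in $G$; it is still not contained in $Z(G)O_p(G)$, because it contains $N$; and $B$ still covers a nilpotent $G$-stable block $b^{*}$ of $\cO N^{*}$ — stability is again quasiprimitivity, and nilpotency (as well as the defect group) survives this harmless enlargement because $N^{*}/N$ is a $p'$-section of $Z(G)$, so that $\cO N^{*}b^{*}\cong \cO N b$. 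Thus we may assume from the start that $O_{p'}(Z(G))\leq N$. Finally, choose a defect group $D$ of $B$ for which $D\cap N$ is a defect group of $b$ (standard for a covered block).

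Now \Cref{fong2}, applied to $N\lhd G$, $b$ and $D$, produces $M\lhd L$ with $M\cong D\cap N$ and $L/M\cong G/N$, a central extension $\tilde L$ of $L$ by a $p'$-group, and a block $\tilde B$ of $\cO\tilde L$ that is basic Morita equivalent to $B$ and has defect group isomorphic to $D$. Setting $H:=\tilde L$ and $B_H:=\tilde B$ gives the asserted basic Morita equivalence and the isomorphism of defect groups immediately; only the inequality $[H:O_{p'}(Z(H))] < [G:O_{p'}(Z(G))]$ remains to be checked.

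For the upper bound, the $p'$-kernel of the central extension $\tilde L\twoheadrightarrow L$ lies inside $O_{p'}(Z(H))$, so $[H:O_{p'}(Z(H))]\leq |L| = |M|\cdot|L/M| = |D\cap N|\cdot[G:N]$. For the lower bound, since $O_{p'}(Z(G))\leq N$ we have $[G:O_{p'}(Z(G))] = [G:N]\cdot[N:O_{p'}(Z(G))]$, so the required inequality reduces to $|D\cap N|\cdot|O_{p'}(Z(G))| < |N|$. Here $O_{p'}(Z(G))\leq O_{p'}(Z(N))\leq O_{p'}(N)$, and $(D\cap N)\,O_{p'}(N)$ is a subgroup of $N$ of order $|D\cap N|\cdot|O_{p'}(N)|$, so $|D\cap N|\cdot|O_{p'}(Z(G))|\leq|D\cap N|\cdot|O_{p'}(N)|\leq|N|$. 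If both inequalities were equalities, then $O_{p'}(Z(G)) = O_{p'}(N)$ and $N = (D\cap N)\,O_{p'}(N)$, so $N$ would be the product of a $p$-group with the central $p'$-subgroup $O_{p'}(Z(G))$, hence nilpotent with $D\cap N$ as its characteristic — and so $G$-normal — Sylow $p$-subgroup; this forces $N\leq O_p(G)Z(G)$, contradicting the hypothesis. Hence the inequality is strict. The single non-formal point in the argument is precisely this last step, turning $N\not\leq Z(G)O_p(G)$ into the numerical bound; everything else is a verbatim application of Fong's Second Reduction, which is why the corollary deserves to be called immediate.
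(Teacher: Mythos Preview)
Your proposal is correct and matches the paper's approach: the paper states this as an ``immediate corollary'' of \Cref{fong2} (citing \cite{ea17} for the original statement) and gives no further argument, so you have simply supplied the details the paper omits --- the harmless enlargement of $N$ by $O_{p'}(Z(G))$ and the verification that the index inequality is strict using $N\not\leq Z(G)O_p(G)$.
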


A block $B$ of $\cO G$ is \emph{nilpotent covered} if these exists a group $\tilde{G}$ and a nilpotent block $\tilde{B}$ of $\cO \tilde{G}$ such that $G \lhd \tilde{G}$ and $\tilde{B}$ covers $B$. This concept is closely related to inertiality, as the following proposition shows.
\begin{lemma}[\cite{pu11}, \cite{zhou16}]  \label{nilcov}
Let $G$ be a finite group and let $N \lhd G$. Let $b$ be a $p$-block of $\cO N$ covered by a block $B$ of $\cO G$. Then 
\begin{enumerate}[(i)]
\item If $B$ is inertial, then $b$ is inertial.
\item If $b$ is nilpotent covered, then $b$ is inertial.
\item If $p$ does not divide $[G:N]$ and $b$ is inertial, then $B$ is inertial.
\item If $b$ is nilpotent covered, then it has abelian inertial quotient.
\end{enumerate}
\end{lemma}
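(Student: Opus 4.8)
These are proved in \cite{pu11} and \cite{zhou16}; I sketch the approach, which runs all four parts through the source-algebra description of inertiality. Recall that a block $B$ with defect group $P$ and inertial quotient $E$ is inertial if and only if a source algebra of $B$ is isomorphic, as an interior $P$-algebra, to $S \otimes_{\cO} \cO_\gamma(P \rtimes E)$ for some Dade $P$-algebra $S$ (the endomorphism algebra of an endo-permutation $\cO P$-module with vertex $P$) and some $\gamma \in Z^2(E, \cO^\times)$: by K\"ulshammer's theorem the Brauer correspondent $B_P$ in $N_G(P)$ has source algebra $\cO_\gamma(P \rtimes E)$, and by Puig's criterion $B$ and $B_P$ are basic Morita equivalent precisely when their source algebras agree up to a Dade $P$-algebra tensor factor. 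In particular being inertial, being nilpotent, the defect group, the fusion system and the inertial quotient are all invariants of source algebra equivalence, so Fong's First Reduction (\Cref{fong1}) --- which is realised by such an equivalence --- may be applied freely.

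For (i) and (iii) I would combine this with the Clifford theory of source algebras for blocks of normal subgroups \cite{kupu90, lin18}. First, using \Cref{fong1}, reduce to the case that $b$ is $G$-stable; then, fixing a defect group $D$ of $B$ with $Q := D \cap N$ a defect group of $b$, a source algebra $A_b$ of $b$ embeds into a source algebra $A_B$ of $B$ as the canonical $N$-relative $Q$-interior component, compatibly with the $G/N$-grading on $A_B$. For (i): restrict the isomorphism $A_B \cong S \otimes_{\cO} \cO_\gamma(D \rtimes E)$ to this component and read off a presentation $A_b \cong S' \otimes_{\cO} \cO_{\gamma'}(Q \rtimes E')$, so $b$ is inertial. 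For (iii): the hypothesis $p \nmid [G:N]$ forces $D = Q$ and realises $\cO G b$ as a crossed product of $\cO N b$ by the $p'$-group $G/N$, and I would argue in the reverse direction, showing that a basic Morita equivalence between $\cO N b$ and its Brauer correspondent lifts across the $p'$-quotient (together with the matching lift of Brauer correspondents), so that $B$ is inertial.

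For (ii) and (iv), write $N \lhd \tilde N$ with $\tilde b$ a nilpotent block of $\cO\tilde N$ covering $b$; by \Cref{fong1} we may assume $b$ is $\tilde N$-stable. Puig's structure theorem for nilpotent blocks gives a source algebra of $\tilde b$ of the form $\tilde S \otimes_{\cO} \cO\tilde D$, and $Q := \tilde D \cap N \lhd \tilde D$ is a defect group of $b$. Passing to the $N$-relative component as above produces a source algebra of $b$ of the shape $S' \otimes_{\cO} \cO_\gamma(Q \rtimes E)$, giving (ii); and the inertial quotient $E$ so obtained is realised through automorphisms of $Q$ --- and of the descending Dade-module data --- that must be compatible with the nilpotent, hence $\cO\tilde D$-interior, structure inherited from $\tilde b$, which is what forces $E$ to be abelian and gives (iv).

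The main obstacle, in all four parts, is making the Clifford theory of source algebras genuinely explicit: controlling how the endo-permutation source data --- the Dade-algebra tensor factor and the cocycle twist on the inertial quotient --- restrict from $B$ down to $b$, showing that they extend back when $[G:N]$ is prime to $p$, and, for (iv), deducing abelianness of the inertial quotient from its forced compatibility with a nilpotent block structure on the overgroup. No single step is conceptually hard, but the bookkeeping of the relevant endo-permutation modules and $2$-cocycles is delicate, and it is precisely this that is carried out in \cite{pu11} and \cite{zhou16}.
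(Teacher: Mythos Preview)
Your proposal is consistent with the paper, which gives no independent argument at all: its proof simply records that (i) and (ii) are Theorem~3.13 of \cite{pu11}, (iv) is Corollary~4.3 of \cite{pu11}, and (iii) is the main theorem of \cite{zhou16}. What you have written is a high-level roadmap of the source-algebra machinery behind those references, and in broad outline it is accurate: the characterisation of inertiality via a Dade-algebra tensor factor on the twisted group algebra of $P\rtimes E$, the use of Fong's first reduction to pass to a stable block, and the Clifford theory of source algebras to move between $B$ and $b$ are indeed the ingredients Puig and Zhou deploy.

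Two remarks. First, since the paper itself only cites, there is no ``different route'' to compare; your sketch simply goes one level deeper than the paper does. Second, your account of (iv) is the vaguest of the four: the sentence that compatibility with the $\cO\tilde D$-interior structure ``forces $E$ to be abelian'' is more of a slogan than an argument. In \cite{pu11} the abelianness comes out of a concrete computation with the hyperfocal subalgebra and the extension data, not from a soft compatibility statement, so if you intend this as more than a pointer to the reference you would need to say what is actually being compared. As it stands, though, your write-up functions exactly as the paper's does --- a pointer to \cite{pu11} and \cite{zhou16} --- just with more expository scaffolding.
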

\begin{proof}(i), (ii) are Theorem 3.13 in \cite{pu11}, and (iv) is Corollary 4.3 in the same paper. (iii) is the main theorem of \cite{zhou16}.
\end{proof}



In some cases, the action by conjugation of an overgroup on a block of the group algebra coincides with an inner automorphism of the block. Given $N \lhd G$ and a block $b$ of $N$, the subgroup of all elements that act as an inner automorphism on $b$ is denoted by $G[b]$, and it is a normal subgroup of $G$.

\begin{lemma}[\cite{kekoli}] \label{inneraut}
Let $N \lhd G$, and let $B$ be a block of $G$ covering a block $b$ of $N$. Then there is a block $\hat{b}$ of $G[b]$ that covers $b$ and is covered by $B$, $\hat{b}$ is source algebra equivalent to $b$ and $B$ is the unique block of $G$ that covers $\hat{b}$. 
\end{lemma}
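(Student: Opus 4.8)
The plan is to deduce this from the Külshammer--Puig structure theory of the subgroup $G[b]$, of which the statement is essentially a repackaging. Recall that an element lies in $G[b]$ precisely when it stabilises $b$ and induces an inner automorphism of $\cO N b$; in particular $N \leq G[b]$, since for $n \in N$ the automorphism $c_n$ is realised by the unit $nb \in (\cO N b)^{\times}$.

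First I would reduce to the case that $b$ is $G$-stable, which is where the content lies; the general case then follows by the Fong--Reynolds correspondence relating blocks of $\cO G$ to blocks of $\operatorname{Stab}_G(b)$, under which source algebra equivalence and covering behave correctly (\Cref{fong1}), noting that $G[b] = \operatorname{Stab}_G(b)[b]$. So assume $b$ is $G$-stable. Then $G[b] \lhd G$ contains $N$, and since $b$ is $G[b]$-stable it is a central idempotent of $\cO G[b]$; hence $\cO G[b] \cdot b$ is a sum of blocks of $\cO G[b]$, each covering $b$. As $N \leq G[b] \leq G$, the block $B$ covers at least one of them, which I name $\hat b$.

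The crux is the source algebra equivalence of $\hat b$ and $b$. For each $g \in G[b]$ choose a unit $a_g \in (\cO N b)^{\times}$ with $c_g = \operatorname{conj}(a_g)$; then $g a_g^{-1}$ centralises $\cO N b$, and collecting these over coset representatives exhibits $\cO G[b] \cdot b$ as a crossed product of $\cO N b$ with $G[b]/N$. The decisive structural input, which I would take from Külshammer--Puig, is that because the action of $G[b]$ on $\cO N b$ is inner, this extension does not enlarge the defect group: $b$ and $\hat b$ admit a common defect group $P$ and a common source idempotent $i \in (\cO N b)^{P}$. Granting this, one obtains $i\,\cO G[b]\,\hat b\, i = i\,\cO N b\, i = A$, the source algebra of $b$, so $\hat b$ and $b$ are source algebra equivalent. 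Finally, to see that $B$ is the unique block of $\cO G$ covering $\hat b$, I would use that $G[b] \lhd G$ together with Fong--Reynolds to identify blocks of $\cO G$ covering $\hat b$ with blocks of $\operatorname{Stab}_G(\hat b)$ covering $\hat b$; the defining maximality of $G[b]$ --- every element of $G$ inducing an inner automorphism on $b$ already lies in $G[b]$ --- forces covering to set up a bijection between the blocks of $\cO G$ over $b$ and the $G$-covered blocks of $\cO G[b]$ over $b$, so that $\hat b$ determines $B$ uniquely and conversely.

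The main obstacle is the defect-group statement in the preceding paragraph: proving that the inner-automorphism condition prevents the passage from $N$ to $G[b]$ from enlarging the defect group, so that a single source idempotent $i$ serves both $b$ and $\hat b$. This is the technical heart of the Külshammer--Puig theory, and everything else --- the reduction to the $G$-stable case and the uniqueness of the covering block $B$ --- is comparatively formal, resting on the Fong-type correspondences already recorded above.
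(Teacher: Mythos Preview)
The paper gives no proof of this lemma at all: it is stated with a bare citation to \cite{kekoli} and used as a black box. So there is nothing to compare your argument against on the paper's side.

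That said, your outline is broadly the right shape for how the result is actually established in the literature. The reduction to the $G$-stable case via Fong--Reynolds is standard, and your identification of the crossed-product structure of $\cO G[b]\,b$ over $\cO N b$ with inner action is the correct starting point. You are also right that the technical weight sits in showing that passing from $N$ to $G[b]$ does not enlarge the defect group, so that a common source idempotent exists; this is precisely what the Dade/K\"ulshammer description of $G[b]$ gives. Your uniqueness argument is the weakest part: the clean statement you want is that, for $b$ $G$-stable, covering induces a \emph{bijection} between blocks of $\cO G$ lying over $b$ and blocks of $\cO G[b]$ lying over $b$ (this is Dade's theorem, reproved in \cite{kekoli}); the uniqueness of $B$ over $\hat b$ then falls out immediately, whereas your phrasing in terms of ``maximality'' of $G[b]$ does not quite pin this down.
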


We use two results from \cite{mck19} that relate the inertial quotient of a block of $G$ with those of normal subgroups of $G$. Since these play a crucial role in our proof, we include them here for convenience. 

\begin{lemma}[4.11 \cite{mck19}] \label{iqlemma1}
Let $N \lhd G$ be finite groups. Let $B$ be a block of $G$ with inertial quotient $E_B$ covering a $G$-stable block $b$ of $N$ with inertial quotient $E_b$, such that $B$ and $b$ share an abelian defect group $D$. Let $b_D$ be a Brauer correspondent of $b$ in $C_N(D)$. Then the following statements are true: 
\begin{enumerate} 
		\item \label{IQ tool case1}If $C_G(D)=C_N(D)$, then there is a monomorphism $E_b \hookrightarrow E_B$ whose image is normal. 
		\item \label{IQ tool case2} If $C_N(D) \neq C_G(D)$ and $C_G(D)_{b_D}=C_N(D)$, then there is a monomorphism $E_b \hookrightarrow E_B$ whose image is normal.
		\item \label{IQ tool case3} If $[C_G(D):C_N(D)]=[G:N]$ and $C_G(D)_{b_D}=C_G(D)$, then there is a monomorphism $E_B \hookrightarrow E_b$. 
\end{enumerate} 
In particular, if $[G:N]$ is prime, then either $E_b$ is isomorphic to a normal subgroup of $E_B$, or $E_B$ is isomorphic to a subgroup of $E_b$. 	
\end{lemma}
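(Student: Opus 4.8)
The plan is to translate all three statements into comparisons of stabiliser subgroups inside $N_G(D)$ via the Brauer correspondence, and then to read off the required (mono)morphisms from the second isomorphism theorem. Throughout I will write $C=C_G(D)$ and $\bar C=C_N(D)=C\cap N$; since $D$ is abelian, $D\le \bar C\le C$, and since $b$ is a block of $N$ with defect group $D$ we have $D\le N$. Fix a Brauer correspondent $e$ of $B$ in $C$ (so $e^G=B$), so that $E_B=N_G(D,e)/C$, and recall $E_b=N_N(D,b_D)/\bar C$. I will use freely the standard facts that $C\lhd N_G(D)$, that $C$ fixes its own block $e$, that $N_N(D)=N_G(D)\cap N$, and that conjugation by elements of $N_N(D)$ preserves both $C$ and $\bar C$.

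The first step is the compatibility of the Brauer correspondence with block covering: I claim $e$ covers a Brauer correspondent of $b$ in $\bar C$. As $b$ is $G$-stable, $B$ covers $b$ alone, so $Bb=B$ and hence $\Br_D(B)=\Br_D(B)\Br_D(b)$; since $\Br_D(b)$ is the sum of the $N_N(D)$-orbit of $b_D$ (Brauer's extended first main theorem) and the blocks of $\bar C$ covered by $e$ form a single $C$-orbit, it follows that $e$ covers some member of the $N_N(D)$-orbit of $b_D$. Replacing $b_D$ by that conjugate — which changes neither $E_b$ nor the case hypotheses, because $N_N(D)$ normalises $C$ and $\bar C$ — I may assume $e$ covers $b_D$; then the blocks of $\bar C$ covered by $e$ are exactly the $C$-orbit of $b_D$, of size $[C:C_{b_D}]$, where $C_{b_D}:=C_G(D)_{b_D}$.

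Now the three cases. In Cases (1) and (2) one has $C_{b_D}=\bar C$ (automatically when $C=\bar C$, by hypothesis otherwise), so by Fong--Reynolds $e$ is the unique block of $C$ covering $b_D$; hence any element of $N_G(D)$ fixing $b_D$ also fixes $e$, so $N_G(D,b_D)\le N_G(D,e)$, and since $C\le N_G(D,e)$ acts transitively on the $C$-orbit of $b_D$ one gets $N_G(D,e)=N_G(D,b_D)\,C$. Therefore $E_B=N_G(D,e)/C\cong N_G(D,b_D)/(N_G(D,b_D)\cap C)=N_G(D,b_D)/\bar C$, and within this quotient $E_b=N_N(D,b_D)/\bar C=(N_G(D,b_D)\cap N)/\bar C$ is a normal subgroup because $N\lhd G$, which is the asserted monomorphism $E_b\hookrightarrow E_B$ with normal image. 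In Case (3) the hypothesis $[C:\bar C]=[G:N]$ forces $G=NC$, hence $N_G(D)=N_N(D)\,C$; writing a fixer of $e$ as a product of an element of $N_N(D)$ and one of $C$ (the latter fixing $e$ automatically) gives $N_G(D,e)=(N_G(D,e)\cap N_N(D))\,C$, so $E_B\cong(N_G(D,e)\cap N_N(D))/\bar C$. On the other hand $C_{b_D}=C$ makes the $C$-orbit of $b_D$ a singleton, so $e$ covers $b_D$ alone; thus every element of $N_G(D,e)\cap N_N(D)$ fixes $b_D$, i.e.\ $N_G(D,e)\cap N_N(D)\le N_N(D,b_D)$, giving $E_B\hookrightarrow E_b$. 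Finally, for the last assertion, $[C:\bar C]$ divides $[G:N]$ because $C/\bar C\hookrightarrow G/N$, so when $[G:N]$ is prime $[C:\bar C]\in\{1,[G:N]\}$: the first case is Case (1), and in the second $C_{b_D}$ has prime index over $\bar C$, hence equals $\bar C$ (Case (2)) or $C$ (Case (3)).

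The main obstacle I anticipate is the first step — certifying that a Brauer correspondent of $B$ genuinely covers a Brauer correspondent of $b$. This is exactly where the hypothesis that $b$ is $G$-stable (so $B$ covers $b$ alone) enters, combined with $\Br_D$ being a ring homomorphism, Brauer's extended first main theorem, and the fact that the blocks of a normal subgroup covered by a given block form a single orbit. A subsidiary point that must be checked is that the hypotheses $C_G(D)_{b_D}=C_N(D)$, resp.\ $C_G(D)_{b_D}=C_G(D)$, are insensitive to replacing $b_D$ by an $N_N(D)$-conjugate, which is what makes the reduction ``$e$ covers $b_D$'' legitimate. Once these foundational points are settled, each of the three cases is routine bookkeeping with the second isomorphism theorem and orbit--stabiliser.
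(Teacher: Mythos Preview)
The paper does not supply its own proof of this lemma: it is quoted verbatim from \cite[4.11]{mck19} ``for convenience'' and left unproved here, so there is no in-paper argument to compare against. That said, your proof is correct and is in fact the natural one for this kind of statement. The only places worth a second look are exactly the ones you already flagged.

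For the compatibility step, your argument is fine once written carefully: since $b$ is $G$-stable the block idempotents satisfy $1_B\,1_b=1_B$, and as $\Br_D$ is multiplicative on $(kG)^D$ one gets $e=e\,\Br_D(1_b)$; hence $e$ covers some block in the $N_N(D)$-orbit $\Br_D(1_b)$ of $b_D$. Your observation that replacing $b_D$ by an $N_N(D)$-conjugate preserves both $E_b$ (up to the obvious conjugation) and the hypotheses $C_{b_D}\in\{\bar C,C\}$ is also correct, since $N_N(D)$ normalises both $\bar C$ and $C$.

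In Cases~(1)--(2), one small point you use implicitly deserves to be said: having shown $N_G(D,b_D)\le N_G(D,e)$, the equality $N_G(D,e)=N_G(D,b_D)\,C$ follows because $N_G(D,e)$ permutes the blocks of $\bar C$ covered by $e$, and those form the single $C$-orbit of $b_D$; hence any $g\in N_G(D,e)$ can be corrected by some $c\in C$ so that $gc^{-1}$ fixes $b_D$. In Case~(3), the passage from $G=NC$ to $N_G(D)=N_N(D)\,C$ is justified by comparing orders: $C\bar{} \,N_N(D)/N_N(D)\cong C/\bar C$ already has index~$1$ in $N_G(D)/N_N(D)\hookrightarrow G/N$. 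With these clarifications made explicit, everything goes through.
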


\begin{lemma}[4.12 \cite{mck19}] \label{iqlemma2}
	Let $N \lhd G$ be finite groups such that $G/N$ is abelian, and let $B$ be a block of $G$ with inertial quotient $E_B$ covering a block $b$ of $N$ with inertial quotient $E_b$. Suppose that $B$ and $b$ share an abelian defect group $D$. Then there is a chain $N \lhd K \lhd G$ and a block $b_K$ of $K$ with inertial quotient $E_{b_K}$, and there are monomorphisms $E_b \hookleftarrow E_{b_K} \hookrightarrow E_B$ such that the image of $E_{b_K}$ is normal in $E_B$. 
\end{lemma}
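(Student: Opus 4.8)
The plan is to induct on $[G:N]$, using a composition series of the abelian quotient $G/N$ to reduce to the prime-index case, where \Cref{iqlemma1} applies directly. First I would reduce to the case that $b$ is $G$-stable: if it is not, I replace $G$ by $H=\operatorname{Stab}_G(b)$ and $B$ by the block of $H$ supplied by \Cref{fong1}, which is source algebra equivalent to $B$ (hence has the same inertial quotient and defect group) while $H/N$ remains abelian. Since $b$ and $B$ share the abelian defect group $D$ and $b$ is a block of $N$, we have $D\leq N$, and any block $c_i$ of an intermediate normal subgroup $N_i$ with $N\lhd N_i\lhd G$ that is covered by $B$ and covers $b$ again has defect group $D$ up to conjugacy.

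The base case is $[G:N]$ prime. Here \Cref{iqlemma1} yields either a normal embedding $E_b\hookrightarrow E_B$, in which case I take $K=N$ and $b_K=b$, or an embedding $E_B\hookrightarrow E_b$, in which case I take $K=G$ and $b_K=B$; either choice meets the conclusion. For the inductive step the organizing device is $M=C_G(D)N$. Because $G/N$ is abelian, $M/N$ is a subgroup of $G/N$, so $M\lhd G$; moreover for every $H$ with $M\leq H\leq G$ we have $C_H(D)=C_G(D)$, so $C_{\,\cdot}(D)$ is constant above $M$ and every prime-index step there falls into the first case of \Cref{iqlemma1}. Normality of the resulting images is obtained not by transitivity (which would fail) but directly: whenever $N\lhd K\lhd G$ one has $N_K(D,e)=N_G(D,e)\cap K\lhd N_G(D,e)$, so the image of $E_{b_K}$ in $E_B$ is automatically normal. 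This reduces the whole problem to the essential case $C_G(D)N=G$.

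In that essential case I would compute all the inertial quotients inside one ambient group. Writing $C=C_G(D)$ and $C_N=C_N(D)$, the identity $N_G(D)=C\,N_N(D)$ holds, and since $C$ acts trivially on its own blocks (inner automorphisms fix central idempotents) one checks that $E_B\cong N_N(D)_e/DC_N$ and $E_b=N_N(D)_{e_N}/DC_N$, where $e$ is a block of $C$ covering the block $e_N$ of $C_N$, and both are subquotients of the single group $N_N(D)/DC_N$. The intermediate subgroups $N_i=C_iN$ with $C_N\leq C_i\leq C$ are all normal in $G$, and their blocks satisfy $E_{c_i}\cong N_N(D)_{e_i}/DC_N$ for a chain of covering blocks $e_N=e_0,\dots,e_r=e$. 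The task becomes purely group-theoretic: locate an index $j$ so that $N_N(D)_{e_j}$ is contained in $N_N(D)_{e_N}$ and normal in $N_N(D)_e$; then $K=N_j$ and $b_K=c_j$ work, the containment giving $E_{b_K}\hookrightarrow E_b$ and the normality together with $K\lhd G$ giving the normal embedding into $E_B$.

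The main obstacle is precisely this last step: the stabilizers $N_N(D)_{e_i}$ need not vary monotonically along the chain, so neither $K=N$ nor $K=G$ need work, and one must find the transition point. I expect to handle it by exploiting the freedom, granted by $G/N$ being abelian, to reorder the composition series of $C/C_N$ so that all steps of case~3 of \Cref{iqlemma1} (where the inertial quotient descends) precede all steps of case~2 (where it ascends with normal image), and then to take $K$ at the junction; verifying that this reordering is legitimate and that the Brauer correspondents $e_i$ can be chosen compatibly along the reordered chain is the technical heart of the argument. A secondary point requiring care is confirming that the intermediate blocks $c_i$ can be chosen so that the relevant stability hypotheses of \Cref{iqlemma1} are met at each step.
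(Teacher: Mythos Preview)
The paper does not supply a proof of this lemma: it is simply quoted from \cite{mck19} (Lemma~4.12 there), so there is no in-paper argument to compare your proposal against.

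On its own merits, your outline follows the natural strategy---stabilise $b$, split at $M=C_G(D)N$, and in the essential case $G=C_G(D)N$ realise all the inertial quotients as subgroups of the single group $N_N(D)/DC_N(D)$---and this is essentially the shape of the argument in the cited source. The points you flag as obstacles are genuine. One concrete issue to tighten: your assertion that ``whenever $N\lhd K\lhd G$ one has $N_K(D,e)=N_G(D,e)\cap K\lhd N_G(D,e)$'' only makes sense when $C_K(D)=C_G(D)$, since otherwise $e$ is not a block of $C_K(D)$ at all; in the essential case you correctly switch to the covered block $e_j$, but then neither the containment $N_N(D)_{e_j}\leq N_N(D)_e$ nor its normality is automatic from $K\lhd G$ alone (stabilising $e$ need not stabilise a block it covers, nor conversely). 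The reordering argument you propose---pushing all case-3 steps before all case-2 steps---is the right idea, but carrying it out requires checking that the covering chain of Brauer correspondents can be rechosen coherently after reordering, and this is where the real work lies.
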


We will consider groups $N \lhd G$ where the action of $G/N$ normalises but does not centralise $D$, and hence induces non-trivial automorphisms of $D$. In this situation we use the following lemma, originally stated for split extensions in \cite{mckthesis}.

\begin{lemma}[4.3.7 \cite{mckthesis}] \label{stack}
	Let $L \lhd G$, and let $D \leq L$. Then $C_G(D)=C_L(D)$ if and only if each nontrivial element of $N_G(D) \setminus N_L(D)$ induces an automorphism of $D$ that is not in the image of $N_L(D)/C_L(D)$ in $\aut(D)$. 

\end{lemma}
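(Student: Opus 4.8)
The plan is to translate the statement into a straightforward comparison of the two conjugation short exact sequences attached to $D$ inside $L$ and inside $G$. Since $L \leq G$ we have the identities $N_L(D) = N_G(D) \cap L$ and $C_L(D) = C_G(D) \cap L$; in particular $C_L(D) \leq C_G(D)$, and the left-hand side of the claimed equivalence, $C_G(D) = C_L(D)$, holds if and only if $C_G(D) \leq L$. Let $\rho \colon N_G(D) \to \aut(D)$ be the conjugation homomorphism, so that $\ker \rho = C_G(D)$, while its restriction $\rho|_{N_L(D)}$ has kernel $C_L(D)$ and image $A_L := N_L(D)C_G(D)/C_G(D) \cong N_L(D)/C_L(D)$ — this is exactly the ``image of $N_L(D)/C_L(D)$ in $\aut(D)$'' of the statement. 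With this notation the right-hand condition reads: $\rho(x) \notin A_L$ for every $x \in N_G(D) \setminus N_L(D)$.

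For the direction ($\Leftarrow$) I would take an arbitrary $g \in C_G(D)$ and show $g \in L$. Since $\rho(g) = 1 \in A_L$, the contrapositive of the hypothesis forces $g \notin N_G(D) \setminus N_L(D)$, hence $g \in N_L(D) \leq L$. Thus $C_G(D) \leq L$, i.e. $C_G(D) = C_L(D)$.

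For ($\Rightarrow$), assume $C_G(D) = C_L(D)$ and suppose, for contradiction, that some $x \in N_G(D) \setminus N_L(D)$ satisfies $\rho(x) \in A_L$. Pick $\ell \in N_L(D)$ with $\rho(\ell) = \rho(x)$; then $x\ell^{-1} \in \ker \rho = C_G(D) = C_L(D) \leq L$, so $x = (x\ell^{-1})\ell \in L$, and therefore $x \in N_G(D) \cap L = N_L(D)$, contradicting the choice of $x$. This closes the equivalence.

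I do not expect a serious obstacle: the argument is a short diagram chase, resting on the set-theoretic identities $N_L(D) = N_G(D) \cap L$ and $C_L(D) = C_G(D) \cap L$ and on the reformulation $C_G(D) = C_L(D) \iff C_G(D) \leq L$. (Normality of $L$ in $G$ is in fact not needed for this statement, only $L \leq G$, although it holds in the intended applications.) The only thing requiring care is bookkeeping which inclusion is invoked in each direction, together with the trivial observation that the identity automorphism always lies in $A_L$.
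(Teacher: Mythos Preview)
Your argument is correct; the paper itself does not supply a proof of this lemma, merely citing \cite[4.3.7]{mckthesis}, so there is no in-paper proof to compare against. Your diagram chase via the conjugation map $\rho\colon N_G(D)\to\aut(D)$ is the natural (and essentially only) way to prove this elementary equivalence, and your observations that the ``nontrivial'' qualifier is redundant (since $1\in N_L(D)$) and that normality of $L$ is not actually used are both accurate.
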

%
%
%

The $2$-blocks of quasisimple groups with abelian defect groups are classified in \cite{ekks}, the main result of which is as follows:

\begin{proposition}[6.1 \cite{ekks}] \label{quasisimpleblocks}
	
Let $L$ be a quasisimple group, and $b$ be a $2$-block of $\mathcal{O}L$ with abelian defect group $D$. Then one of the following holds. 
\begin{enumerate}			
		\item \label{EKKSi} $L/Z(L)$ is one of ${}^2G_2(q)$, $J_1$, or $SL_2(2^a)$, and $b$ is the principal block. 
		
		\item \label{EKKSii} $L$ is $Co_3$, $D \cong (C_2)^3$, and $b$ is the unique non-principal block of $L$.
		
		\item \label{EKKSiii} $b$ is nilpotent-covered, i.e. there exists a finite $\tilde{L} \rhd L$ with $Z(\tilde{L}) \geq Z(L)$ such that a nilpotent block of $\tilde{L}$ covers $b$.
		
		\item \label{EKKSiv} For some $M=M_0 \times M_1 \leq L$, $b$ is Morita equivalent to a block $b_M$ of $\mathcal{O}M$ such that the defect groups of $b_M$ are isomorphic to $D$. Further, $M_0$ is abelian, and the block of $\mathcal{O}M_1$ covered by $b_M$ has Klein Four defect groups. In particular, $b$ is Morita equivalent to a tensor product of a nilpotent block and a block with defect group $C_2 \times C_2$.	\end{enumerate}
\end{proposition}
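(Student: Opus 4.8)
The plan is to prove this classification by invoking the classification of finite simple groups (CFSG) and treating each family of quasisimple groups in turn. Since $L$ is quasisimple, $S := L/Z(L)$ is a nonabelian finite simple group and $L$ is a central quotient of the universal covering group of $S$; fix a $2$-block $b$ of $\cO L$ with abelian defect group $D$, and note that the $2$-part of $Z(L)$ necessarily lies in $D$. The goal is to show that for every possible $S$, each such $b$ has one of the explicit shapes in \eqref{EKKSi}--\eqref{EKKSii}, is nilpotent-covered \eqref{EKKSiii}, or is Morita equivalent to a block of the product form \eqref{EKKSiv}. I would organise the argument around the four families: sporadic groups (and their covers), alternating groups $A_m$ (and the double covers $2.A_m$), groups of Lie type in defining characteristic $2$, and groups of Lie type in non-defining (odd) characteristic.

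First I would dispatch the three more tractable families. For the sporadic groups there are only finitely many, and their $2$-modular block invariants (defect groups and decomposition data) are known, so one checks each group and each central cover directly; this is exactly where the exceptional block of $Co_3$ with defect group $(C_2)^3$ in \eqref{EKKSii} appears, and one verifies that every other sporadic block with abelian defect lands in \eqref{EKKSiii} or \eqref{EKKSiv}. For the alternating groups I would use the block theory of symmetric groups: a $2$-block of $S_m$ has abelian defect group precisely when its $2$-weight $w$ satisfies $w<2$, so $w\le 1$; weight-$0$ blocks have defect zero (nilpotent), weight-$1$ blocks have cyclic ($C_2$) defect and fall under \eqref{EKKSiii}/\eqref{EKKSiv}, and one then transfers these conclusions to $A_m$ and to $2.A_m$ by Clifford theory. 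For groups of Lie type in defining characteristic, the $2$-blocks reduce to one block of full defect together with blocks of defect zero, and abelianity of the full defect group (equivalently, of the Sylow $2$-subgroup, via Walter's classification of simple groups with abelian Sylow $2$-subgroups) forces type $\SL_2(2^a)$, giving the principal-block case of \eqref{EKKSi}; the remaining members of Walter's list, namely $J_1$, ${}^2G_2(q)$ and the $\PSL_2(q)$ with Klein-four Sylow, account for the other entries of \eqref{EKKSi} and the principal-block instances of \eqref{EKKSiv}.

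The substantial work is the remaining family of quasisimple groups of Lie type in non-defining characteristic. Here I would apply the Jordan decomposition of blocks (Bonnafé--Dat--Rouquier) to replace $b$ by a quasi-isolated block in a centraliser, and then use $e$-Harish-Chandra theory in the sense of Cabanes--Enguehard, with $e$ the multiplicative order of $q$ modulo $4$ when $p=2$. The defect group of $b$ is then governed by the Sylow $\Phi_e$-structure of the relevant torus, and the task is to characterise exactly when it is abelian and to match the outcome against the list, drawing on the work of Kessar--Malle on abelian-defect (height-zero) blocks of quasisimple groups. Crucially, one must pin down not merely the defect group but the Morita-equivalence class: for \eqref{EKKSiv} this means recognising $b$ as a tensor product of a nilpotent block with a block of Klein-four defect, which rests on the classification of blocks with defect group $C_2\times C_2$ together with the Morita equivalences supplied by Jordan decomposition.

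The main obstacle, and the technical heart of the proof, is precisely this non-defining-characteristic Lie-type analysis. Three difficulties stand out: controlling how the central torus of the quasisimple cover $L$ interacts with $b$ and with $D$, since the isogeny type and the central quotient change the answer; ensuring that the Jordan-decomposition reductions preserve both the abelian-defect hypothesis and the Morita-equivalence class, which requires the full strength of the Bonnafé--Dat--Rouquier equivalences rather than only character bijections; and carrying out the uniform identification of the product structure in \eqref{EKKSiv} from the combinatorics of $e$-cuspidal pairs. Once these are established, assembling the four families yields the stated four-way dichotomy.
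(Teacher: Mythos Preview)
Your outline is a reasonable high-level sketch of how the classification in \cite{ekks} is actually obtained, but note that the paper does not attempt to prove this proposition at all: it is quoted as \cite[6.1]{ekks}, and the entire ``proof'' in the paper is a one-line pointer to \cite[2.9]{ar19} and \cite{ea17} for how to extract the extra structural details (the precise form of case~\eqref{EKKSiv}, in particular) from the statements in \cite{ekks}. In other words, in this paper the proposition functions purely as a cited black-box input to the main argument, not as something to be reproved.

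As a sketch of the \cite{ekks} argument itself, your plan is broadly accurate: the proof there does run through CFSG, with the sporadic and alternating cases handled by direct inspection and combinatorics, the defining-characteristic case reduced via Humphreys' block structure and Walter's theorem, and the bulk of the work devoted to groups of Lie type in odd characteristic using Lusztig induction, $e$-Harish-Chandra theory, and the Bonnaf\'e--Rouquier/Bonnaf\'e--Dat--Rouquier equivalences. Your identification of the non-defining-characteristic case as the technical heart, and of the three obstacles (interaction with the centre, preservation of Morita class under Jordan decomposition, recognising the tensor-product form), matches what actually happens. One small caution: in the alternating case you should also take care that the passage from $S_m$ to $A_m$ can split a block and alter the defect group, and the double covers $2.A_m$ genuinely require Kessar's separate treatment rather than pure Clifford theory; but at the level of a plan this is fine. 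The point remains that none of this is carried out in the present paper.
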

\begin{proof}Guidance on the extraction from \cite{ekks} of the additional details in the statement of this theorem compared to the form in \cite{ekks} is given in \cite[2.9]{ar19} (see also \cite{ea17}).\end{proof}

\section{Free actions and blocks}
Considering a block whose defect group $D$ is abelian of rank $n$, we can often characterise the action of the inertial quotient by its action on $\Omega(D) \cong (C_2)^n$.  When the inertial quotient is generated by a power of a Singer cycle this action is always free, though the converse is not true: a counterexample is given when $D=(C_2)^6$ and $E=C_7$, labeled as (x) in \cite{ar20}. Nevertheless, when a cyclic group acts freely on $\Omega(D)$, the action is remarkably similar to that of an inertial quotient generated by a power a Singer cycle (called a ``sub-Singer action'' in \cite{mckthesis}, where this case is studied extensively). Here we explore the properties of free actions:

\begin{lemma} \label{orbitstab}
Let $D$ be a finite abelian $2$-group of rank $n$, and let $E \leq \Out(D)$. Suppose $E$ acts freely on the non-trivial elements of $\Omega(D)$. Then $E$ acts freely on the non-trivial elements of $D$, and $|E|$ divides $2^n -1$.
\end{lemma}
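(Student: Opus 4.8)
The plan is to prove the contrapositive-flavoured structural statement in two steps: first show that a free action on $\Omega(D) \setminus \{1\}$ forces $|E|$ to divide $2^n - 1$, and then bootstrap freeness on $\Omega(D)$ up to freeness on all of $D$. For the divisibility, I would let $x \in E$ be a generator (or indeed take $E$ itself) and consider the permutation action of $E$ on the $2^n - 1$ non-trivial elements of $\Omega(D)$. Since the action is free, every orbit has size exactly $|E|$, so the orbits partition a set of size $2^n - 1$ into blocks of size $|E|$; hence $|E| \mid 2^n - 1$. This is the orbit--stabiliser argument that gives the lemma its name, and it is completely routine.

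\medskip

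The substantive part is upgrading freeness on $\Omega(D)$ to freeness on $D \setminus \{1\}$. Here I would argue by contradiction: suppose some non-identity $x \in E$ fixes a non-trivial element $d \in D$. Write $D \cong C_{2^{m_1}} \times \cdots \times C_{2^{m_r}}$ and recall that $\Omega(D)$ is the unique elementary abelian subgroup, consisting of all elements of order dividing $2$. The key point is that any automorphism (or outer automorphism class, but since $D$ is abelian $\Out(D) = \Aut(D)/\Inn(D) = \Aut(D)$) that fixes $d$ also fixes the subgroup $\langle d \rangle$ pointwise on its socle: specifically, if $d$ has order $2^j$, then $d^{2^{j-1}}$ is a non-trivial element of $\Omega(D)$ fixed by $x$. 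This immediately contradicts freeness on $\Omega(D) \setminus \{1\}$. So the only element of $E$ fixing any non-trivial element of $D$ is the identity, which is exactly freeness on $D \setminus \{1\}$.

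\medskip

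The one subtlety I would want to handle carefully is the passage from $\Out(D)$ to $\Aut(D)$: the lemma states $E \leq \Out(D)$, but for an abelian group $\Inn(D)$ is trivial, so $\Out(D) = \Aut(D)$ and there is no ambiguity in speaking of $x$ acting on elements of $D$. (This is why the hypothesis that $D$ is abelian is used, beyond just ensuring $\Omega(D)$ is well-behaved.) Once that identification is made, the argument above is purely elementary group theory — the power map $d \mapsto d^{2^{j-1}}$ commutes with every automorphism, and freeness is inherited by restriction along $E$-equivariant maps with the property that they send non-trivial elements to non-trivial elements.

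\medskip

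I do not expect any real obstacle here; this is a warm-up lemma establishing the basic combinatorial constraint ($|E| \mid 2^n - 1$) and the reduction of the freeness hypothesis from $\Omega(D)$ to $D$, both of which will be used repeatedly in the induction that proves \Cref{maintheorem}. If anything, the only thing to be mildly careful about is not assuming $D$ is homocyclic: the argument via $d \mapsto d^{2^{j-1}}$ works uniformly regardless of whether the cyclic factors of $D$ have equal order, so the generalisation to arbitrary abelian $D$ costs nothing.
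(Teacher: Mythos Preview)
Your proposal is correct and follows essentially the same approach as the paper: both arguments use the power map $d \mapsto d^k$ to push a hypothetical fixed point of $D$ down into $\Omega(D)$, and both invoke orbit--stabiliser on the $2^n-1$ nontrivial elements of $\Omega(D)$ for the divisibility. The only cosmetic difference is that you prove divisibility first (directly on $\Omega(D)$) whereas the paper establishes freeness on $D$ first; your ordering is arguably cleaner since divisibility only needs the hypothesis, not the first conclusion.
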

\begin{proof}
Suppose that $E$ does not act freely on the nontrivial elements of $D$. Then there are non-trivial elements $d \in D$ and $e \in E$ such that $d^e = d$. For some $k \in \mathbb{N}$, $1 \neq d^k \in \Omega(D)$, and hence we have $(d^k)^e=d^k$. This is a contradiction, since $E$ acts freely on $\Omega(D) \backslash \{1\}$: hence, $E$ must act freely on $D \backslash \{1\}$.

Since the stabiliser of any nontrivial element in $D$ is trivial, the orbit-stabiliser theorem tells us that each orbit has size $|E|$. Each element occurs in exactly one orbit, and so there are $(2^n-1)/|E|$ orbits. In particular, $|E|$ divides $2^n-1$.
\end{proof}

Next we consider the structure of the orbits in $D$ under the action of $E$:
\begin{lemma}[\cite{mckthesis}] \label{singo}
Let $x$ be a Singer cycle of $\mathbb{F}_{p^n}$, and $s$ a proper divisor of $p^n-1$. Let $O$ be an orbit of a nontrivial element. Then $O \cup \{0\}$ is a subgroup of $\mathbb{F}_{p^n}^\times$ if and only if $|x^s|=p^r-1$ for some $r$. 
\end{lemma}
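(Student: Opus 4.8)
The plan is to reduce the statement to a concrete question about subgroups of the cyclic group $\mathbb{F}_{p^n}^\times$ and then solve it by elementary group theory. Write $q = p^n$ and let $x$ generate the cyclic group $\mathbb{F}_q^\times$ of order $q-1$. Fix a proper divisor $s$ of $q-1$; the subgroup $\langle x^s \rangle$ has order $(q-1)/\gcd(q-1,s)$, but it is cleaner to work with the subgroup $H := \langle x^s \rangle$ directly: the orbits of the action of $\langle x^s \rangle$ on $\mathbb{F}_q^\times$ by multiplication are precisely the cosets of $H$, so an orbit $O$ of a nontrivial element satisfies ``$O \cup \{0\}$ is a subgroup of $\mathbb{F}_q$'' exactly when $O = H$ (the identity coset), since any coset containing $0$'s orbit-neighbourhood would have to be a subgroup and the only coset that is a subgroup is $H$ itself. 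Note also that $O \cup \{0\}$ being a \emph{multiplicative} subgroup of $\mathbb{F}_q^\times$ forces, by the standard fact that a finite subgroup of $\mathbb{F}_q^\times$ closed under addition of $0$ and multiplication is a subfield, the set $O \cup \{0\}$ to be a subfield $\mathbb{F}_{p^r}$ of $\mathbb{F}_q$; so the condition is equivalent to $H \cup \{0\} = \mathbb{F}_{p^r}$ for some $r \mid n$, i.e. $H = \mathbb{F}_{p^r}^\times$ of order $p^r - 1$.

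From here the argument is short. First I would record the easy direction: if $H$ has order $p^r - 1$, then since $\mathbb{F}_{p^r}^\times$ is the \emph{unique} subgroup of $\mathbb{F}_q^\times$ of that order (cyclic groups have a unique subgroup of each divisor order), and $\mathbb{F}_{p^r}^\times$ is closed under the addition forced by adjoining $0$, we get $H \cup \{0\} = \mathbb{F}_{p^r}$, a subgroup of $\mathbb{F}_q$ as required; here one needs $p^r - 1 \mid q - 1$, which holds precisely when $r \mid n$, so this is where the hypothesis $|x^s| = p^r - 1$ (which presupposes such an $r$ exists) is used. Conversely, if $O \cup \{0\}$ is a subgroup of $\mathbb{F}_q$, then as noted it is a subfield $\mathbb{F}_{p^r}$, and $O = \mathbb{F}_{p^r}^\times$ has order $p^r - 1$; but $O$ is an orbit under $\langle x^s\rangle$ of size $|H|$ (orbit–stabiliser, all stabilisers trivial), so $|x^s| = |H| = |O| = p^r - 1$.

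The one genuinely delicate point — the ``main obstacle'' — is pinning down the correspondence between the orbit $O$ of an arbitrary nontrivial element and the subgroup $H = \langle x^s\rangle$, and in particular arguing that ``$O \cup \{0\}$ is a subgroup'' can only happen for $O = H$ rather than for some nontrivial coset: a coset $aH$ with $a \notin H$ contains $a$ but its product set is $a^2 H \ne aH$ in general, so it is not closed under multiplication unless $aH = H$; this needs to be stated carefully so that the biconditional is genuinely about $H$ and not some other orbit. I would also take care with the degenerate reading of ``proper divisor'': $s$ proper ensures $H \ne \mathbb{F}_q^\times$, i.e. $r < n$, so the statement is not vacuous, and this should be flagged. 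Once these bookkeeping points are settled, the proof is just the two short implications above, invoking the uniqueness of subgroups of a cyclic group and the subfield criterion for additively-closed subgroups of $\mathbb{F}_q^\times \cup \{0\}$.
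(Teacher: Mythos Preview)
The paper does not prove this lemma; it is quoted from \cite{mckthesis} without proof, so there is nothing to compare against directly. Your core idea---identify $H\cup\{0\}$ (where $H=\langle x^s\rangle$) with a subfield $\mathbb{F}_{p^r}$ via the classification of subfields of $\mathbb{F}_{p^n}$ and the uniqueness of subgroups in a cyclic group---is exactly right and is the natural argument. (The statement as printed contains a typo: ``subgroup of $\mathbb{F}_{p^n}^\times$'' should read ``subgroup of $(\mathbb{F}_{p^n},+)$'', since $0\notin\mathbb{F}_{p^n}^\times$; you interpret this correctly from context.)

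There is, however, a genuine error in your reduction from a general orbit $O=aH$ to the identity coset $H$. You assert that $O\cup\{0\}$ being a subgroup forces $O=H$, on the grounds that a nontrivial coset $aH$ is not closed under multiplication (its product set is $a^2H$). But multiplicative closure is irrelevant here: the question is whether $aH\cup\{0\}$ is closed under \emph{addition}. Since $aH\cup\{0\}=a\cdot(H\cup\{0\})$ and multiplication by any nonzero $a$ is an additive automorphism of $\mathbb{F}_{p^n}$, the set $aH\cup\{0\}$ is an additive subgroup if and only if $H\cup\{0\}$ is---regardless of which coset $aH$ one starts with. So either \emph{every} orbit yields a subgroup or none does; your claim that only $O=H$ can work is false (and in particular, for $a\notin H$ the set $aH\cup\{0\}$ is then an additive subgroup that is \emph{not} a subfield, contradicting what you ``note''). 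The repair is a one-line scaling observation: reduce to $O=H$ via multiplication by $a^{-1}$, then run your subfield argument for $H$, which is correct as written.
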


Note that $r$ divides $n$ if and only if $p^r-1$ divides $p^n-1$. 

\begin{proposition} \label{sub DcapN}
Let $B$ be a block of $\cO G$ with an abelian defect $2$-group of rank $n$ and a cyclic inertial quotient $E$ acting freely on $\Omega(D) \backslash \{1\}$. Let $N \lhd G$. Then there are integers $r,s$ with $r>1$ and $n=rs$, and a decomposition of $D \cap N$ into $s$ homocyclic direct factors of rank $r$: $$D \cap N \cong (C_{p^{m_1}})^r \times (C_{p^{m_2}})^r \times \dots \times (C_{p^{m_s}})^r $$ where $0 \leq m_i \leq \log_2(|D|)$. Further, if $|E|$ divides $p^r-1$ for some $r<n$, then $\Omega(D) \cap N \cong (C_p)^{ra}$ where $1 \leq a \leq n/r$. Otherwise, $\Omega(D) \leq N$. 
\end{proposition}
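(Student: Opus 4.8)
The plan is to notice that the block-theoretic hypotheses are used only to establish one thing: $D \cap N$ is $E$-stable. Indeed, if $g$ represents an element of $E$ then $g \in N_G(D)$ normalises both $N$ (as $N \lhd G$) and $D$, hence normalises $D \cap N$; consequently $\Omega(D) \cap N = \Omega(D\cap N)$ is an $E$-stable $\mathbb{F}_p$-subspace of $V := \Omega(D) \cong (\mathbb{F}_p)^n$. Since $E$ is a (nontrivial) $p'$-group acting faithfully on $D$ and freely on $V \setminus \{1\}$, \Cref{orbitstab} gives $|E| \mid p^n - 1$; I let $r$ be the multiplicative order of $p$ modulo $|E|$, equivalently the least $r$ with $|E| \mid p^r - 1$. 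Then $r \mid n$, so $n = rs$ with $s := n/r$, and $r > 1$ because with $p = 2$ one has $p - 1 = 1$ while $|E| > 1$.

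The engine of the proof will be the following observation, applied repeatedly: if $U \le D$ is any $E$-stable subgroup, then $\Omega(U)$ is an $E$-stable subspace of $V$ on which $E$ still acts freely away from $1$, so the orbit--stabiliser count in the proof of \Cref{orbitstab} shows $r \mid \dim_{\mathbb{F}_p}\Omega(U)$ (trivially when $\Omega(U) = 1$, and since $|E| \mid p^{\dim\Omega(U)} - 1$ otherwise). I would apply this to the characteristic --- hence $E$-stable --- power subgroups $(D\cap N)^{p^k} := \{x^{p^k} : x \in D \cap N\}$. Writing $D \cap N \cong \prod_{j \ge 1}(C_{p^j})^{a_j}$ one has $(D\cap N)^{p^k} \cong \prod_{j>k}(C_{p^{j-k}})^{a_j}$, so $\dim\Omega\big((D\cap N)^{p^k}\big) = \sum_{j>k}a_j$; subtracting consecutive values, $a_k = \dim\Omega\big((D\cap N)^{p^{k-1}}\big) - \dim\Omega\big((D\cap N)^{p^k}\big)$ is a difference of multiples of $r$, whence $r \mid a_k$ for all $k \ge 1$.

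Writing $a_j = r b_j$ and regrouping the cyclic factors in blocks of $r$ then gives $D\cap N \cong \prod_j\big((C_{p^j})^r\big)^{b_j}$, a product of $a := \sum_j b_j = \operatorname{rank}(D\cap N)/r$ homocyclic direct factors of rank $r$; since $\operatorname{rank}(D\cap N) \le \operatorname{rank}(D) = n$ we have $a \le s$, and padding with $s - a$ trivial factors realises $D\cap N$ in the asserted form $\prod_{i=1}^s (C_{p^{m_i}})^r$ with $0 \le m_i \le \log_2|D|$. For the last claim, $\Omega(D) \cap N = \Omega(D\cap N) \cong (C_p)^{ra}$: if $|E| \mid p^r - 1$ with $r < n$ this is the first conclusion, with $1 \le a \le s = n/r$; otherwise $r = n$, so $s = 1$ and $a = 1$, hence $\Omega(D\cap N)$ has rank $n$, whence $\Omega(D\cap N) = \Omega(D)$ and $\Omega(D) \le N$.

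I do not expect a serious obstacle: the argument is essentially careful bookkeeping with coprime actions and the orbit--stabiliser theorem. The two points needing attention are (i) that one must pass to \emph{all} the power subgroups $(D\cap N)^{p^k}$ rather than to $D \cap N$ alone --- the latter yields only divisibility of its rank by $r$, not the homocyclic refinement --- and (ii) the two degenerate cases implicitly set aside by the statement, namely $E$ trivial (where $r = 1$) and $D\cap N$ trivial (where $a = 0$ and ``$\Omega(D)\le N$'' can fail), which should be acknowledged or excluded at the outset.
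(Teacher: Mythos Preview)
Your argument is correct and is in fact tidier than the paper's own proof. The paper proceeds by fixing a minimal generating set of $D$, looking at the span $\langle X\rangle$ of a single $E$-orbit inside $\Omega(D)$, and then appealing to the Singer-cycle Lemma~\ref{singo} to control the possible intersections $\Omega(D)\cap N$; the homocyclic decomposition of $D\cap N$ is then read off from which generators lie in $N$. You instead define $r$ intrinsically as the multiplicative order of $p$ modulo $|E|$ and apply the free-action divisibility $|E|\mid p^{\dim\Omega(U)}-1$ not just to $U=D\cap N$ but to the whole characteristic filtration $(D\cap N)^{p^k}$; taking successive differences of the $\Omega$-dimensions isolates each multiplicity $a_k$ and forces $r\mid a_k$ directly. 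This bypasses Lemma~\ref{singo} entirely and avoids any choice of generating set, so it is both more elementary and more invariant. The trade-off is that the paper's orbit picture is closer to how the decomposition is actually used later in the proof of Theorem~\ref{maintheorem} (where one compares $D_j$ and $D_{j+1}$ factor by factor), but your formulation yields that application just as well. Your flagging of the degenerate cases $E=1$ (forcing $r=1$) and $D\cap N=1$ (forcing $a=0$, so that ``$\Omega(D)\le N$'' need not hold) is apt: the paper leaves both implicit.
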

\begin{proof}
By \Cref{orbitstab}, $E$ acts freely on $D \backslash \{1\}$. Let $\mathcal{Q}=\{q_1, \dots, q_n\}$ be a minimal generating set of $D$, and let $\mathcal{D}=\{d_1, \dots, d_n\}$ be the corresponding elements in $\Omega(D)$ (also a generating set for $\Omega(D)$). Let $O_{d_1}$ denote the orbit of $d_1$ under $E$, and let $X=\{d_{x_1}, \dots, d_{x_y}\}$ be a minimal subset of $\mathcal{D}$ such that $O_{d_1} \leq \langle X \rangle$. If $\Omega(D) \cap N$ contains $\langle X \rangle$, then there are integers $m_1, \dots, m_s$ it also contains $\langle \tilde{X} \rangle$, where $\tilde{X} = \{q_{x_1}^{m_1}, \dots, q_{x_y}^{m_y}\}$. Thus, $D \cap N$ decomposes into $s$ homocyclic direct factors as stated above (some of which are possibly trivial). 

If $|E|$ does not divide $p^r-1$, by Lemma \ref{orbitstab} the only possibility is that $X=\mathcal{D}$. Otherwise, it may happen that $\Omega(D) \cap N \neq N$, in which case still Lemma \ref{singo} applied to $\langle X \rangle$ proves the statement.


 
 
 \end{proof}

\section{Proof of the main theorem}

Now we are ready to prove \Cref{maintheorem}:

\begin{proof}
Let $G$ be a finite group, and $B$ a $2$-block of $\mathcal{O}G$ with abelian defect group $D$ of rank $n$, and cyclic inertial quotient $E$ such that $E$ acts freely but not transitively on $\Omega(D) \backslash \{1\}$. Note that \Cref{orbitstab} tells us that $E$ acts freely on the non-trivial elements of $D$. 

We define an ordering on blocks of finite groups which satisfy these conditions by considering the lexicographic ordering on $(n, |E|, [G:O_{2'}(Z(G))], |G|)$ (recall that $|E|$ divides $2^n-1$ by \Cref{orbitstab}). Our strategy is to consider a minimal counterexample to \Cref{maintheorem}. That is, we consider the smallest block with respect to this ordering, that satisfies our assumptions but is not inertial. From this we will derive a contradiction. Note that, by the classification in \cite{eali18b}, $n > 3$. 

 We record three important consequences of the assumptions above:
\begin{itemize}
\item[(I)] $B$ is quasiprimitive. That is, for any $N \lhd G$ the blocks of $\cO N$ covered by $B$ are $G$-stable: \Cref{fong1} tells us that $\operatorname{Stab}_G(b)$ has a block $\hat{b}$ basic Morita equivalent to $B$, and so by minimality we have $\operatorname{Stab}_G(b)=G$, implying $b$ is $G$-stable.
\item[(II)] If $N\lhd G$ is such that $B$ covers a nilpotent block $b$ of $N$, then $N \leq O_2(G)Z(G)$. This follows from \Cref{fong2} and \Cref{corollaryfong2}.
\item[(III)] Any block with defect group $D$ and cyclic inertial quotient $F$ acting freely on $D$ with $|F|<|E|$ is inertial. This immediately follows from minimality.
\end{itemize}

Recall that a component of $G$ is a subnormal quasisimple subgroup. The product of all components forms a characteristic subgroup of $G$, called the layer and usually denoted as $E(G)$. Further, the layer is always isomorphic to a central product of the components. Let $F(G)$ be the Fitting subgroup of $G$. Then $F^*(G):=F(G)E(G)$ is the generalised Fitting subgroup of $G$ \cite{gor07}. This is a characteristic subgroup of $G$, and is a central product of the layer and Fitting subgroup. Further, $F^*(G)$ is self-centralising. That is, $C_G(F^*(G)) \leq F^*(G)$.

From (II), we can assume that $F(G)=O_2(G)Z(G)$ since any block of a normal subgroup with odd order has trivial defect. 

Let $C:=C_G(O_2(G)) \lhd G$, and let $c$ be the unique block of $\cO C$ covered by $B$, and $E_c$ its inertial quotient. Then, since $C_G(D) \leq C$, \Cref{iqlemma1} tells us that $E_c \leq E$. Since $E_c$ centralises $O_2(G) \leq D$, we have either $O_2(G)=1$ or $E_c=1$. In the latter case, $c$ is nilpotent and hence $C \leq O_2(G)Z(G)$, in which case $D=O_2(G)$ and the main theorem of \cite{kul85} implies that $B$ is inertial. This is a contradiction to our assumption that $B$ is a counterexample, and so may assume that $O_2(G)=1$. As a corollary, $Z(E(G))$ must have odd order. Further, from \cite[7.6]{sam14} we deduce that $Z(E(G))$ is fixed by the action of $G/F^*(G)$, and hence that $Z(E(G)) \leq Z(G)$.

Consider the layer as a central product of the components, which we denote by $L_i$: $$E(G)=L_1 * L_2 * \dots * L_t.$$  Let $b_E$ be the block of $E(G)$ covered by $B$, and $b_i$ a block of $\cO L_i$ covered by $b_E$. The action of $G$ permutes the components, and so we may consider the $G$-orbit of component. Letting $W$ denote the normal subgroup generated by such an orbit, and letting $b_W$ denote a block of $\mathcal{O}W$ covered by $B$, we may assume that $b_W$ is not nilpotent due to (II). Consequently, no block $b_i$ of any component within that orbit can be nilpotent, since each component in the orbit is isomorphic and so their blocks have isomorphic defect groups. In particular, $b_i$ is not nilpotent for any $i$. This implies that $D \cap L_i$ has rank at least $2$, since for $p=2$ a block with cyclic or trivial defect group is always nilpotent. Since $Z(E(G))$ has odd order, \Cref{sub DcapN} implies that $$D \cap F^*(G) = \prod_{i=1}^t (D \cap L_i)$$

Let $\sigma: G \to \Sigma_t$ be the group homomorphism to the symmetric group on $t$ elements induced by the permutation action of $G$ on the set of its components. Let $K:=\ker(\sigma)$, and let $B_K$ be the unique block of $\cO K$ covered by $B$. Note that $D \leq K$, as every permutation of the components induces a permutation of the elements in the defect group. Further, note that $C_K(D)=C_G(D)$, since an element permutes the set $\{L_i\}$ if and only if it permutes the set $\{D \cap L_i\}$. Then, in particular, we have $C_G(D) \leq K$, and so by \cite[15.1.5]{alp151} $B$ is the unique block of $\cO G$ covering $B_K$. By \cite[15.1.4]{alp151}, this implies $G/K$ has odd order, and therefore it is solvable.

From \cite{bidwell}, there exists an embedding $$K/F^*(G) \hookrightarrow \prod\limits_{i=1}^t \Out(L_i).$$ The latter group is solvable due to Schreier's conjecture, and in fact by considering  \cite[Table 5]{atlas}, it must be supersolvable, implying $K/F^*(G)$ is supersolvable. By applying \cite[3.4]{ar19}, $DF^*(G)/F^*(G)$ is therefore a Sylow $2$-subgroup of $K/F^*(G)$. Suppose that $D \not\leq F^*(G)$, and take a chain of normal subgroups of $G$ as follows, with prime indices, and blocks $b_i$ of $\cO N_i$ covered by $B$ with defect groups $D_i := D \cap N_i$:
$$F^*(G) = N_0 \lhd N_1 \lhd \dots \lhd N_s = K.$$
Let $j$ be the smallest integer such that $[N_{j+1}:N_j]=2$. By \cite[6.8.11]{lin18} $b_{j+1}$ is the unique block that covers $b_j$, and hence by \cite[15.1.4]{alp151} we know $[D_{j+1} : D_j]=2$. If $D_j$ has a complement $R$ in $D_{j+1}$, then $R=C_2$ is fixed by the action of $E$, which is a contradiction to $E$ acting freely on $D \backslash \{1\}$. Otherwise, consider decompositions of $D_{j+1}$ and $D_j$ as in \Cref{sub DcapN}: both defect groups decompose as products of homocyclic direct factors of rank $r$. However, at most one cyclic factor can change, which forces $r=1$, contradicting the fact that $r>1$. Hence, $D \leq F^*(G)$, and so $K/F^*(G)$ has odd order.

Since $C_K(D)=C_G(D)$, we may recursively apply Lemma \ref{iqlemma1} to a chain of normal subgroups, telling us that $E_K \leq E$, and so $E_K$ is cyclic. If $E_K$ is a proper subgroup of $ E$, then by induction $B_K$ is inertial, and hence by \Cref{nilcov} so is $B$. Hence, we may assume that $E_K=E$, and so by minimality we must have $K=G$, since $O_{2'}(Z(G)) \leq K$ and hence $[K:O_{2'}(Z(K))] \leq [G:O_{2'}(Z(G))]$. In particular, we've shown that each component $L_i$ is normal in $G$.

Next, we will show there are few possibilities for the structure of each component, and each has inertial blocks. Without loss of generality, it is enough to prove that a single component, namely $L_1$, must belong to that set of possibilities.

Note that since all components are normal in $G$, by \cite{bidwell} we can assume that $G$ is a subgroup of a perfect central extension $H$ of $\prod_{i=1}^t \aut(L_i/Z(L_i))$, and further that $G$ is normal in this group, since $G/F^*(G)$ has odd order and all outer automorphism group are solvable by Schreier's conjecture, so $G/F^*(G)$ is supersolvable. In other words, $H$ is a central product of almost quasisimple groups, and it serves as the ambient group for all the extensions that we will consider.

We record a construction that we use in several cases: when $L_1$ is isomorphic to a simple group $S$, we can write $E(G)=S \times T$. Let $M=SC_G(S) \lhd G$. Since $S$ is normal in $G$, so is $C_G(S)$. Since $C_G(S) \cap S = \{1\}$, we have $M =  S \times C_G(S)$. Now, let $b_M$ be the unique block of $\cO M$ covered by $B$: it is the tensor product of $b_1$ and a uniquely determined block $b_C$ of $\cO C_G(S)$, so in particular its inertial quotient has the form $E_M \cong E_1 \times F$. Further, $C_{G/F^*(G)}(S) \leq \Out(T)$, so in particular $[G:M]$ is at most $|\Out(S)|$. 

In this framework, we examine each possibility from Theorem \ref{quasisimpleblocks} to determine which ones can occur.

\begin{itemize}
\item If $L_1=J_1$ or $L_1=\operatorname{Co}_3$ then in particular $L_1$ is simple. Further, $\Out(L_1)=1$, so $M=G$, and hence the group $G$ has $S$ as a direct factor. But then $E = E_S \times F$ where $E_S=C_7 \rtimes C_3$ (\cite{ea14}). This cannot happen because $E$ is cyclic, a contradiction.

\item If $L_1 = S = \SL_2(2^a)$, since $\Out(S)\cong C_a$ then $G/M$ can be embedded in $C_a$ and is, hence, cyclic and a fortiori abelian. By Lemma \ref{iqlemma2}, there exists a chain $M \lhd J \lhd G$ with blocks $b_M, b_J, B$ such that the inertial quotients satisfy the following: $E_M \geq E_J \leq E$. By (III) $E_J=E$ and hence, by minimality, $J=G$.

By construction, $E_M=C_{2^a-1} \times F$ for a finite group with odd order $F$. Suppose that $F=1$. Then the block of $C_G(S)$ covered by $B$ is nilpotent, so from (II) $C_G(S) \leq O_2(G)Z(G)$. In particular, $E(G)=S$. But then $G \leq \Aut(S)$ and, therefore $E$ acts transitively on $D$, a contradiction. Hence, we assume that $F \neq 1$. 

We claim that $M=J$. This is implied by $C_J(D)=C_M(D)$, as then all intermediate centralisers between $M$ and $J$ do not increase and we can use Lemma \ref{iqlemma1} for each inclusion of normal subgroups, compose the embeddings and get an embedding $E_M \leq E_J$, which since $E_J \leq E_M$ implies that $E_M = E_J$, and hence $M=J$.

Let $x$ be a generator of $G/M$. By Lemma \ref{stack}, our claim is equivalent to the following statement:
$$\forall x \in N_G(D) \setminus N_M(D) \quad , \quad \forall m \in N_M(D) \quad, \quad \tau_x|_D \neq \tau_m |_D$$
where $\tau_x$, $\tau_m$ denote the automorphisms of $D$ induced by conjugation.

Let $\Out_G(M)$ be the image in $\Out(M)$ of the map $g \mapsto \tau_g$. Since $M$ is a direct product of two subgroups that are normal in $G$, $\Out_G(M)=\Out_G(S) \times \Out_G(C_G(S))$. In particular, by taking preimages for each fixed $x$ as above there are decompositions $\tau_x=\tau_x^S\tau_x^C$ and $\tau_m = \tau^S_m \tau^C_m$. 

Suppose that $C_J(D) \neq C_M(D)$. By Lemma \ref{stack} then there exists $m \in N_M(D)$ such that $\tau_m |_D = \tau_x|_D$. Then $\tau_m^S = \tau_x^S$, and $\tau_x^S$ acts on $D \cap S$ as a field automorphism of $S$. 

Being an element of a direct product, $m$ decomposes as $m = st = ts$, for $s \in N_S(D \cap S)$ and $t \in T$, where $\tau_t |_S = \operatorname{Id}_S$, and $\tau_m^S|_D = \tau_s^S|_D$. Hence, there is an element $s \in N_{\SL_2(2^a)}(D)$ whose action by conjugation coincides with the one of a nontrivial power of a field automorphism of $\SL_2(2^a)$. Since $N_{\SL_2(2^a)}(D) = D \rtimes C_{2^a-1}$, and $C_{2^a-1}$ acts as a Singer cycle on $D$ (see, for example, \cite{mck19}). However, the action of $s$ on $D$ fixes one nontrivial element by \cite[2.5]{mck19}, which is a contradiction.

Then $C_J(D)=C_M(D)$ and hence $M=J$ and $J=G$, so $M=G$. But then $E_M = E$. However, $E_M$ is not cyclic: a contradiction.

\item If $L_1$ is of type $D_n(q)$ or $E_7(q)$, suppose without loss of generality that $D \cap L_1 \geq (C_2)^3$ (as otherwise it can be included in the Klein four case, that we examine later). First, note that we can assume that $L_1=S$ is a simple group, as (again from \cite[Table 5]{atlas}) the Schur multipliers have even order but $Z(E(G))$ has odd order. Again from \cite[Table 5]{atlas}, $G/M$ is cyclic since it embeds in $\Out(S)$. We define $M$ as above, and note that $E_M \geq E$, and $E_M = C_3 \times F$. Now $D \cap S = (C_2)^3$ contains an element which is fixed both by the action of $C_3$ and by the action of $F$. In particular, then, every subgroup of $E_M$ fixes it. This is a contradiction to $E$ acting freely on $D$.

\item If $L_1$ is such that $D \cap L_1$ is a Klein four group, then by \cite{erd82, lin18} $b_1$ is source algebra equivalent to either $\cO A_4$ or $B_0(\cO A_5)$. In particular, the orbit $D \cap L_1$ has size $3$, so from Lemma \ref{sub DcapN} $E \cong C_3$ and every orbit has size $3$. If $b_1$ is source algebra equivalent to $\cO A_4$, then in particular it is inertial. 

For a block $c$ there is an injective map $\Out(c) \to \Pic(c)$ \cite[55.11]{cure87}, and since by \cite{eali18} we have that $\Pic(B_0(\cO A_5)) = C_2$, we deduce that $G/L_1$ acts as an inner automorphism on $b_1$.

Similarly, if $L_1 = {}^2 G_2(q)$ from \cite{g2inner} every automorphism of $L_1$ induces an inner automorphism of $b_1$. Hence, suppose that either $L_1 = {}^2G_2(q)$ or $b_1$ is Morita equivalent to $B_0(\cO A_5)$. Either way, $G/L_1$ induces an inner automorphism of $b_1$.

We repeat the construction of $M=L_1C_G(L_1)$, except that $L_1 \cap C_G(L_1) = Z(L_1)$, so in general $M = L_1 * C_G(L_1)$ is just a central product (not necessarily direct). Still, from \cite[7.5]{sam14} $b_M$ is isomorphic to $b_1 \otimes b_C$ where $b_1$ is a block of $\cO L_1$. In particular, they share an inertial quotient. Note that by \cite[7.6]{sam14} since all components are normal $G/M$ can be embedded in $\Out(L_1) \times \Out(T)$.

Let $Q\cong G/M$, and let $\phi_1 \in \aut(L_1)$, $\phi_C \in \aut(C_G(L_1))$ such that a preimage of a generator of $Q$ in $G$ acts on $M$ as $\phi_1\phi_C$ (so $\phi_1$ induces an inner automorphism of $b_1$).

Consider the extension $N=(L_1.Q) * (C_G(L_1).Q)$ where $L_1.Q \cong M.Q/C_G(L_1)$ (still inducing $\phi_1$), and $C_G(L_1).Q \cong M.Q/L_1$ (still inducing $\phi_C$). Consider blocks of each group linked by the covering relation and their inertial quotients, as in the following diagram:

\begin{minipage}{\linewidth}

\begin{displaymath}
\xymatrix@R=9pt@C=3pt{
&   {\begin{array}{cc} H =(L_1.Q) * C_G(L_1) \\ b_H \cong b_{1Q} \otimes b_C \\ E_1 \times F \end{array}} \ar[dr]^{\lhd} & \\
{\begin{array}{cc} M=  L_1*C_G(L_1) \\ b_M \cong b_1 \otimes b_C \\  E_1 \times F \end{array}} \ar[ur]^{\lhd}_{SA} \ar[dr]^{\lhd} &  &  {\begin{array}{cc} N=  (L_1.Q) * (C_G(L_1).Q) \\ B_N \cong b_{1Q} \otimes b_{CQ} \\ E_N = E_1 \times F' \end{array}} \\
&    {\begin{array}{cc} G=  (L_1*C_G(L_1)).Q \\ B \\ E\end{array}}  \ar[ur]^{\lhd}_{SA} &
}
\end{displaymath}
\end{minipage}

By \Cref{inneraut}, $b_1$ is source algebra equivalent to $b_{1Q}$ and therefore $b_H$ is source algebra equivalent to $b_M$. Similarly, $B$ is source algebra equivalent to $B_N$ because $N[B] = N$, since the action on $M$ of $N/M.Q \cong Q$ differs from the action of $M.Q/M \cong Q$ just by $\phi_1$, which is induced by an inner automorphism of $b_1$.

The inertial quotient of $B_N$ is $E_N = E_1 \times F'$ for some (possibly trivial) group $F'$, which can be deduce by considering the normal subgroup $H$ as an intermediate step.

If $L_1={}^2G_2(q)$ then $E_1 = C_7 \rtimes C_3$, a contradiction since $E_N \cong E$ and $E$ is cyclic. Otherwise, if $b_1$ is Morita equivalent to $B_0(\cO A_5)$ then $D \cap L_1 = (C_2)^2$ contains a nontrivial orbit of $E$, so $E\cong C_3$ and hence, since $E_1 = C_3$ already, $F'=1$. But then, since $C_D(E)=1$, $D=[D,E]= D \cap L_1$, a contradiction since then $E$ acts transitively on $D$.

\item Finally, if the block $b_1$ of $\cO L_1$ is nilpotent covered, then it is inertial by \Cref{nilcov}.
\end{itemize}

We have shown that each component $L$ is such that the block of $\cO L$ covered by $B$ is inertial, and hence the block of $E(G)$ covered by $B$ is also inertial. By Theorem \ref{nilcov}, since $[G:E(G)]$ is odd, then $B$ is inertial. 

Hence, by the main theorem of \cite{kul85}, $B$ is basic Morita equivalent to a twisted group algebra $\cO_\alpha(D \rtimes E)$, where $\alpha \in H^2 (E, k^\times)$. Since $E$ is cyclic, $H^2(E, k^\times)=1$, and hence $\alpha=1$ and $B$ is Morita equivalent to the group algebra $\cO(D \rtimes E)$. Then by Lemma 3.8 in \cite{ar20}, we must have $l(B)=|E|$. 

Recall that a $B$-subsection $(u,b_u)$ is defined as a Brauer $B$-subpair $(\langle u\rangle ,b_u)$. Since $B$ is basic Morita equivalent to $\cO(D \rtimes E)$, for its fusion system it holds that $\mathcal{F}(B) \cong \mathcal{F}(D \rtimes E)$. In particular, to compute invariants of subsections it is enough to do so in a set of representatives $\mathcal{R}$ of the orbits of $D$ under the action of $E$. From Brauer’s second main theorem

$$k(B)-l(B) = \sum_{(u,b_u) \in \mathcal{R}, u \neq 1} l(b_u)$$

Since $E$ acts freely, $b_u$ is a nilpotent block for each $u \neq 1$, so in particular $l(b_u)=1$. Then $k(B)-l(B) = |\mathcal{R}|$. By the orbit-stabiliser theorem, $|\mathcal{R}| = \frac{2^n-1}{|E|}$. Since $l(B)=|E|$, we are done.
\end{proof}

\small \

\end{document}